\title{Biorthogonal Rational Krylov Subspace Methods}
\author{ Niel Van Buggenhout\footnotemark[2]
	\and Marc Van Barel\footnotemark[2]
	\and Raf Vandebril\footnotemark[2]}
\newlength\figureheight
\newlength\figurewidth
\newtheorem{theorem}{Theorem}[section]
\newtheorem{remark}{Remark}[section]
\newtheorem{lemma}{Lemma}[section]
\newtheorem{example}{Example}[section]
\newtheorem{property}{Property}[section]
\DeclareMathOperator{\spn}{span}
\DeclareMathOperator{\struct}{struct}
\DeclareMathOperator{\x}{x}
\newcommand{\temp}{\mathrm{temp}}
\newcommand{\tempv}{\mathrm{tempv}}
\newcommand{\tempw}{\mathrm{tempw}}
\newcommand{\K}{\mathcal{K}}
\newcommand{\Ls}{\mathcal{L}}
\newcommand{\Cmm}{\mathbb{C}^{m\times m}}
\newcommand{\Cm}{\mathbb{C}^{m}}
\newcommand{\Cb}{\bar{\mathbb{C}}}
\newcommand{\bv}{{\bar{v}}}
\newcommand{\bw}{{\bar{w}}}
\newcommand{\tv}{{\tilde{v}}}
\newcommand{\tw}{{\tilde{w}}}
\newcommand{\hv}{{\hat{v}}}
\newcommand{\hw}{{\hat{w}}}
\newcommand{\Hi}{H^{\textrm{inv}}}
\newcommand{\Ki}{K^{\textrm{inv}}}
\newcommand{\fakesection}[1]{%
	\par\refstepcounter{section}
	\sectionmark{#1}
	\addcontentsline{toc}{section}{\protect\numberline{\thesection}#1}
}
\providecommand{\keywords}[1]{\textit{Keywords: } #1}
\date{}
\begin{document}
	\maketitle

	\renewcommand{\thefootnote}{\fnsymbol{footnote}}
	
	\footnotetext[2]{Department of Computer Science, KU Leuven, University of Leuven, 3001 Leuven, Belgium. (raf.vandebril@kuleuven.be, niel.vanbuggenhout@kuleuven.be, marc.vanbarel@kuleuven.be)}
	\footnotetext{The research of the authors was supported by
		the Research Council KU Leuven,
		C1-project (Numerical Linear Algebra and Polynomial Computations),
		by
		the Fund for Scientific Research--Flanders (Belgium), EOS Project no 30468160, and by
		the Research Council KU Leuven:
		C14/16/056 (Inverse-free Rational Krylov Methods: Theory and Applications)}
	
	\begin{abstract}
		A general framework for oblique projections of nonhermitian matrices onto rational Krylov subspaces is developed. 
		To obtain this framework we revisit the
		classical rational Krylov subspace algorithm and prove that the projected matrix can be
		written efficiently as a structured pencil, where the structure can take several
		forms, such as Hessenberg or inverse Hessenberg.
		One specific instance of the structures appearing in this framework for oblique projections is a tridiagonal pencil. This is a direct generalization of the classical biorthogonal Krylov subspace method where the projection becomes a single nonhermitian tridiagonal matrix and of the Hessenberg pencil representation for rational Krylov subspaces. 
		Based on the compact storage of this tridiagonal pencil in the biorthogonal setting,
			we can develop short recurrences.
			Numerical experiments confirm the validity of the approach.
	\end{abstract}
\keywords{Rational Krylov, Biorthogonal, Short Recurrence, Oblique Projection, Matrix Pencil}


	\section{Introduction}
	
	Krylov subspace methods, introduced by A. N.~Krylov \cite{Kr31}, are an indispensable tool in science and engineering for
	transforming large datasets to manageable sizes. There is an
	enormous amount of variants of Krylov subspace methods. A good overview can be found in the books
	of Saad \cite{Sa03}, van der Vorst \cite{vdV03}, and Gutknecht \cite{Gu97}. In
	this article we focus on a particular type of Krylov subspace methods, namely the rational
	Krylov subspace methods in a non-orthogonal, but oblique projection process.
	This allows to save the projected matrix as two tridiagonal matrices, which is a more data-sparse representation compared to the Hessenberg pair arising from orthogonal projection.

	Rational Krylov subspaces were introduced by Ruhe \cite{Ru84}
	illustrating that faster convergence could be obtained when, e.g., approximating non-dominating eigenvalues \cite{Ru94} and constructing a reduced-order model for dynamical systems \cite{GaGrVD94,GaGrVD96,Gr97}.
	
	Arnoldi \cite{Ar51} linked Hessenberg matrices to the orthogonal basis stemming from a
	Krylov subspace and developed an iteration to build up these bases.  Iterative construction of matrices involved in biorthogonal Krylov subspace methods
	are due to Lanczos \cite{La50}, where an oblique
	projection results in a tridiagonal matrix.
	Even though the oblique projection process is less stable than the classical orthogonal
	projection, there is a significant gain in memory storage and computing time.  A nice
	introduction into biorthogonal Krylov subspace methods is provided by Saad \cite{Sa92}. The most popular biorthogonal method for solving systems of equations
	is the BiCGStab method of van der Vorst \cite{Vo92}.
	
	Biorthogonal Krylov subspace methods for rational Krylov subspaces have been described only partially in literature \cite{GaGrVD94,GaGrVD96,Gr97}. This article will generalize previous results and provide a general framework. We will prove that the oblique projection linked to biorthogonal
	rational Krylov subspaces results in a matrix pencil, of which both matrices can be chosen
	to be tridiagonal, possibly nonhermitian. The highly structured pencil allows us to develop a
	short recursion to compute the biorthogonal bases and the projected pencil.
	To derive these results we first need to reconsider the structure of the orthogonally projected matrix
	linked to a classical rational Krylov subspace. We prove that instead of the
	single rational Hessenberg matrix we can also work with a pair of matrices of particular
	structure, such as Hessenberg or inverse Hessenberg.
	
	Gutknecht studied short recursions, $(k,l)$-step methods for fixed point equations \cite{Gu15,Gu89}, by means of a Hessenberg-triangular pencil. Some classical Krylov subspace methods can be described by $(k,l)$-step methods, e.g., BiCG is a $(2,1)$-step method. The biorthogonal rational Lanczos method introduced here does not immediately fit this framework.
	
	Some notable results are provided below, which are in some sense special cases of the general framework provided here. We discuss which spaces are used and what structure the projection onto these subspaces exhibits. 
	
	Using orthogonality of Laurent polynomials, Jagels and Reichel \cite{JaRe11} constructed a recurrence for extended Krylov subspaces with regularity in the poles (a repetition of $i\geq 1$ times $A$ and one time $A^{-1}$) and a symmetric matrix $A$. They represented their projected matrix as a single matrix. Schweitzer \cite{Sc17} constructed in a similar way a nonsymmetric Lanczos iteration for extended Krylov subspaces, only valid when a negative and positive power is alternated in both spaces.
	Gallivan, Grimme and Van Dooren derived a nonsymmetric rational Lanczos iteration \cite{GaGrVD96}. They use the same poles in both subspaces and represent the projection as a pencil, which is a tridiagonal pencil, except for some off-diagonal fill-in when a change of pole occurs.
	Watkins \cite{Wa93} provided the first elegant representation of the AGR/CMV-factorization \cite{AmGrRe86,BuEl91,Wa93,CaMoVe03,Si07} as a matrix pencil, for a nice overview of the history we refer to the paper by Simon \cite{Si07}. This factorization is in fact also a biorthogonal relation, but for unitary matrices.
	

	Some elementary results are provided in Section \ref{sec:basics}, with a focus on sparsity and low-rank structure.
	Section \ref{sec:EKS} discusses rational Krylov subspace methods and the structure of the projection.
	Section \ref{sec:BiExt} deals with biorthogonal rational Krylov subspace methods and an overview presenting all possible structures.
	In Section \ref{sec:RatLan} a rational Lanczos iteration is derived based on the tridiagonal pencil structure, some numerical experiments are performed illustrating the validity of the approach.

	\section{Basics}\label{sec:basics}
	Since this text will rely on matrix computations and the main results involve sparsity and low-rank structure, this section is devoted to these types of structure (structure will refer from now on to both sparsity and low-rank structure).
	Useful elementary results for standard Krylov subspace methods are repeated in Section \ref{sec:SKS}. For more details see, e.g., \cite{LiSt13,Sa92,Pa71}.
	Using the QR-factorization we introduce inv-Hessenberg, extended Hessenberg and rational Hessenberg matrices in Section \ref{sec:structure}.
	
	\subsection{Standard Krylov subspaces} \label{sec:SKS}
	Standard Krylov subspace methods perform an orthogonal projection of some matrix $A\in \Cmm$ onto the Krylov subspace
	 $$\mathcal{K}_n(A,v) = \spn\{v,Av,A^2v,\dots,A^{n-1}v \},$$
	 with a starting vector $v\in \mathbb{C}^{m}$, $\Vert v \Vert_2=1$. Note that these subspaces are nested, i.e., $\mathcal{K}_{n-1} \subseteq \mathcal{K}_{n} $. Using the Arnoldi iteration \cite{Ar51} a \emph{nested} orthonormal basis $V_n$ for $\mathcal{K}_n$ can be iteratively constructed together with the projection onto the lower dimensional subspace $\mathcal{K}_{n-1}(A,v)$: $V_{n-1}^HAV_{n-1}=H_{n-1}\in \mathbb{C}^{(n-1)\times (n-1)}$. \\
	A basis $V_n \in \mathbb{C}^{m\times n}$ for a subspace $\mathcal{S}_n$ of dimension $n$ is called nested if $\mathcal{S}_1\subseteq \mathcal{S}_2 \subseteq \mathcal{S}_3 \subseteq \dots$, where $\mathcal{S}_i$ is spanned by the first $i$ columns of $V_n$.
	The projected matrix $H_n$ has upper-Hessenberg structure, i.e., $h_{i,j} = 0$ for $i>j+1$, where $h_{i,j}$ denotes the element on the $i$th row and $j$th column of $H_n$. An alternative notation that will be used is $(H_n)_{i,j}$.
	In general $H_n$ exhibits no particular structure above its diagonal.
	\begin{remark}
			Throughout this text we assume that no breakdowns occur, i.e., no subdiagonal element $h_{i+1,i}$ of the projection $H_{i+1} = V_{i+1}^HAV_{i+1}$ is zero. Since, a zero would imply that the subspace $\mathcal{K}_i$ is invariant under multiplication with $A$ or in other words $A\mathcal{K}_i=\mathcal{K}_i$. Here every occasion where it is impossible to expand the current subspace $\mathcal{S}_i$, i.e., $\mathcal{S}_{i+1} = \mathcal{S}_i$ will be called a breakdown. Typically a breakdown is a lucky event, i.e., lucky termination and we will therefore not focus on it. Serious breakdowns can also occur, see Gutknecht \cite{Gu97} and references therein for details.
	\end{remark}

	For a full reduction, i.e., $n=m$ the subscripts are dropped $V^HAV = H$, $H \in \mathbb{C}^{m\times m}$.\\
	The structure of $H_n$ can be represented as shown in Figure \ref{fig:Hess_example}, where $\textrm{struct}(M)$ of some matrix $M$ shows generic nonzero elements as $\times$ and omits the zeros.
	In case of a Hermitian matrix $A^H = A$, the orthogonal projection onto $\mathcal{K}_n(A,v)$ results in a Hermitian Hessenberg matrix $V_n^HAV_n = T_n$. Or in other words it has Hessenberg structure both above and below its diagonal and is therefore tridiagonal, which is shown in Figure \ref{fig:Hess_example}.
	Since we assumed no breakdowns, the Hessenberg and tridiagonal matrix are both \emph{proper}, i.e., no zeros appear on the subdiagonal.
	\begin{figure}[!ht]
		\centering
		\begin{subfigure}[b]{0.4\textwidth}
			\centering
			{	\begin{tikzpicture} 
 \matrix (M) [matrix of nodes,left delimiter={[},right delimiter={]}] 
{$\times$&$\times$&$\times$&$\times$&$\times$&$\times$&$\times$\\
$\times$&$\times$&$\times$&$\times$&$\times$&$\times$&$\times$\\
&$\times$&$\times$&$\times$&$\times$&$\times$&$\times$\\
&&$\times$&$\times$&$\times$&$\times$&$\times$\\
&&&$\times$&$\times$&$\times$&$\times$\\
&&&&$\times$&$\times$&$\times$\\
&&&&&$\times$&$\times$\\
};
\end{tikzpicture}}
			\caption{$\struct(H_n)$}
		\end{subfigure}
		\begin{subfigure}[b]{0.4\textwidth}
			\centering
			{	\begin{tikzpicture} 
\matrix (M) [matrix of nodes,left delimiter={[},right delimiter={]}] 
{$\times$&$\times$&&&&&\\
	$\times$&$\times$&$\times$&&&&\\
	&$\times$&$\times$&$\times$&&&\\
	&&$\times$&$\times$&$\times$&&\\
	&&&$\times$&$\times$&$\times$&\\
	&&&&$\times$&$\times$&$\times$\\
	&&&&&$\times$&$\times$\\
};
\end{tikzpicture}}
			\caption{$\struct(T_n)$}
		\end{subfigure}
		\caption{Generic nonzero elements of a Hessenberg matrix $H_n$ and tridiagonal matrix $T_n$ are shown as $\times$.}
		\label{fig:Hess_example}
	\end{figure}
	\subsection{Sparsity and low-rank structure}\label{sec:structure}
	The sparsity that a Hessenberg matrix exhibits below its diagonal is also contained in its QR-factorization. The QR-factorization decomposes a matrix into the product of a unitary matrix $Q$ and upper-triangular matrix $R$.\\
	To discuss the QR-factorization of a Hessenberg matrix we require core transformations. \emph{Core transformations} in this text will refer to unitary matrices $C_i$ that equal the unit matrix with a $2\times 2$ unitary block embedded on the diagonal starting in row and column $i$:
	\begin{equation*}
	%
	C_i = \begin{bmatrix}
	I_{i-1} \\
	& \times & \times \\
	& \times & \times \\
	& & & I_{n-i-1}\\
	\end{bmatrix},
	\end{equation*}
	where $C_i$ is of size $n\times n$ and $I_{k}$ denotes the unit matrix of size $k\times k$.
	To compactly visualize a core transformation $C_i$, the notation
	$\footnotesize
	\begin{array}{cccc}
	\Rc \\
	\rc\end{array}$ will be used, with the top arrow pointing to row $i$ and the bottom arrow pointing to row $i+1$.
	Multiplication from the left with a core transformation $C_i$: $C_iM$, only affects the $i$th and $(i+1)$th rows of the matrix $M$.\\
	
	\begin{lemma} [QR-factorization of Hessenberg matrices]\label{lemma:QR_Hess}
		Consider a proper upper-Hessenberg matrix $H\in \mathbb{C}^{n\times n}$, $h_{i,j}=0$ for $i>j+1$, the QR-factorization of $H$ can be written as $H=C_1C_2\dotsm C_{n-1}R$, where the $C_i$ are nontrivial core transformations.
	\end{lemma}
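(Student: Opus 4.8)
The plan is to build the factorization constructively, peeling off one Givens-type core transformation at a time, sweeping the subdiagonal of $H$ from top to bottom, and reading off nontriviality directly from the properness assumption.

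Concretely, set $H^{(0)} := H$ and, for $i = 1,\dots,n-1$, let $C_i$ be the core transformation acting on rows $i$ and $i+1$ chosen so that $C_i^H H^{(i-1)}$ has a zero in position $(i+1,i)$; put $H^{(i)} := C_i^H H^{(i-1)}$. Two bookkeeping facts carry the argument. First, at the start of step $i$ the entry in position $(i+1,i)$ is still the original subdiagonal entry $h_{i+1,i}$: the previously applied transformations $C_1,\dots,C_{i-1}$ act only on rows $1,\dots,i$, so row $i+1$ has not yet been modified. Since $H$ is proper, $h_{i+1,i}\neq 0$, and a $2\times 2$ rotation that annihilates a nonzero entry necessarily has nonzero off-diagonal entries; hence every $C_i$ is nontrivial. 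Second, the transformations create no fill-in below the subdiagonal and destroy none of the zeros already produced. This follows from the invariant that $H^{(i-1)}$ is upper-Hessenberg with its first $i-1$ subdiagonal entries equal to zero: when $C_i^H$ replaces rows $i$ and $i+1$ by combinations of these two rows, the only position that could spill below the subdiagonal is $(i+1,i-1)$, but that combination involves $H^{(i-1)}_{i,i-1}$, which is zero by the invariant, together with $H^{(i-1)}_{i+1,i-1}$, which is zero by the Hessenberg structure.

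Propagating this invariant through $i=1,\dots,n-1$, the final matrix $H^{(n-1)}$ is upper-Hessenberg with all subdiagonal entries zero, hence upper triangular; call it $R$. Since $H^{(n-1)} = C_{n-1}^H\cdots C_1^H H$, rearranging gives $H = C_1 C_2\cdots C_{n-1} R$, and the product $C_1\cdots C_{n-1}$ is unitary as a product of unitary matrices, so this is a genuine QR-factorization.

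I do not anticipate a real obstacle here; the argument is the classical Givens reduction of a Hessenberg matrix, and the only point requiring care is the verification of the two invariants above — in particular that, because each row $i+1$ is untouched until step $i$, the subdiagonal entry seen at step $i$ is exactly $h_{i+1,i}$, which is where properness enters and what forces each core transformation to be nontrivial. An alternative route would invoke existence and essential uniqueness of the QR-factorization together with the structure of the Hessenberg QR step, but the direct construction is cleaner and makes the nontriviality transparent.
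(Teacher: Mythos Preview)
Your argument is correct: this is exactly the classical Givens reduction of a Hessenberg matrix to triangular form, and your two invariants (row $i+1$ is untouched until step $i$, so the entry being annihilated is the original $h_{i+1,i}$; and no fill-in occurs below the subdiagonal because both rows $i$ and $i+1$ of $H^{(i-1)}$ vanish in columns $1,\dots,i-1$) are precisely what is needed. The properness hypothesis is used exactly where you say, to force each $C_i$ to be nontrivial.

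As for comparison with the paper: the paper does not actually prove this lemma. It is stated as a known structural fact and then used immediately to define the notion of a descending pattern of core transformations. So there is nothing to compare against; your proof supplies the standard justification that the paper omits. One very small expositional remark: when you check the no-fill-in invariant you single out position $(i+1,i-1)$, but the same reasoning is needed (and works identically) for all columns $1,\dots,i-1$ in both of the affected rows, and in particular also to see that the already-created zero at $(i,i-1)$ is not destroyed. You clearly have this in mind, but stating it for the full range of columns makes the induction airtight.
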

	
	We will refer to $C_1C_2\dotsm C_{n-1}$ as a \emph{descending pattern} of core transformations. In case of an \emph{ascending pattern} $Q=C_{n-1}\dotsm C_2C_{1}$, $QR$ forms an \emph{inv-Hessenberg matrix}. Inv-Hessenberg matrices have a low-rank structure below their diagonal similar to the structure of inverse Hessenberg matrices. We distinguish them from inverse Hessenberg matrices, since they do not have to be invertible. More details can be found in, e.g., the book by Vandebril et al. \cite{VaVBMa07}, where they are called Hessenberg-like matrices.\\
	Now a logical next step is to look at the structure of $Z=QR$ if the shape (the ordering of core transformations) contains ascending and descending patterns, i.e., a permutation of $C_1 C_2 \dotsm C_{n-1}$. To be able to discuss this we note that $C_iC_j=C_jC_i$, for $\vert i-j\vert>1$.	
	Whenever a descending pattern $C_iC_{i+1}$ occurs, a Hessenberg block is formed and whenever an ascending pattern $C_{i+1}C_i$ occurs, an inv-Hessenberg block is formed.
	\begin{example}
		Take, for example, $Q=C_1C_4C_5C_6C_3C_2$, corresponding to the shape
		\begin{equation}
		\begin{array}{cccc}
		& \Rc \\
		& \rc & \Rc\\
		& \Rc&   \rc\\
		\Rc& \rc\\
		\rc & \Rc \\
		& \rc & \Rc\\
		& & \rc
		\end{array}.
		\end{equation}
		The structure of $Z=QR$ is visualized similarly as by Mertens and Vandebril \cite{MeVa15} in Figure \ref{fig:QR_example}. The dashed and dotted lines highlight the structure.\\
		\begin{figure}[!ht]
			\centering
			{\begin{tikzpicture} 
\node[text width=3cm] at (-2.4,0) {{$\textrm{struct}(Z)=$}};
 \matrix (M) [matrix of nodes,left delimiter={[},right delimiter={]}] 
{$\times$&$\times$&$\times$&$\times$&$\times$&$\times$&$\times$\\
$\times$&$\times$&$\times$&$\times$&$\times$&$\times$&$\times$\\
&$\times$&$\times$&$\times$&$\times$&$\times$&$\times$\\
&$\times$&$\times$&$\times$&$\times$&$\times$&$\times$\\
&$\times$&$\times$&$\times$&$\times$&$\times$&$\times$\\
&&&&$\times$&$\times$&$\times$\\
&&&&&$\times$&$\times$\\
};
\draw [thick,dotted] (M-2-1.center) -- (M-1-1.center); 
\draw [thick,dotted] (M-2-1.center) -- (M-3-2.center); 
\draw [thick,dotted] (M-3-2.center) -- (M-3-3.center); 
\draw [thick,dotted] (M-3-3.center) -- (M-1-3.center); 
\draw [thick,dotted] (M-1-3.center) -- (M-1-1.center); 

\draw [thick,dashed] (M-2-2.center) -- (M-5-2.center); 
\draw [thick,dashed] (M-5-5.center) -- (M-5-4.center); 
\draw [thick,dashed] (M-5-4.center) -- (M-5-2.center); 
\draw [thick] (M-2-2.center) -- (M-2-5.center); 
\draw [thick] (M-2-5.center) -- (M-5-5.center); 
\draw [thick,dashed] (M-2-2.center) -- (M-5-5.center); 
 
\draw [thick,dotted] (M-5-4.center) -- (M-4-4.center); 
\draw [thick,dotted] (M-4-4.center) -- (M-4-7.center); 
\draw [thick,dotted] (M-4-7.center) -- (M-7-7.center); 
\draw [thick,dotted] (M-7-7.center) -- (M-7-6.center); 
\draw [thick,dotted] (M-7-6.center) -- (M-5-4.center);

\end{tikzpicture}}
			\caption{Extended Hessenberg matrix $Z$ obtained by a shape of core transformations $Q=C_1C_4C_5C_6C_3C_2$, such that $Z=QR$.}
			\label{fig:QR_example}
		\end{figure}
		
		From the structure in Figure \ref{fig:QR_example} and the corresponding shape of $Q$ it is clear that $C_1C_2$ forms a Hessenberg block $Z_{1:3,1:3}$ (indicated by a dotted line), $C_4C_3C_2$ forms an inv-Hessenberg block $Z_{2:5,2:5}$ (low rank part indicated by a dashed line) and $C_4C_5C_6$ forms again a Hessenberg block $Z_{4:7,4:7}$.\\
	\end{example}
	
	A matrix containing both ascending and descending patterns of core transformations will be referred to as an extended Hessenberg matrix and links to the projection onto an extended Krylov subspace \cite{Va11}, which is a special case of a rational Krylov subspace.

	\section{Rational Krylov subspaces}\label{sec:EKS}
	Rational Krylov subspaces \cite{Ru84} will be denoted by $\K(A,v;\Xi)$, where $A$ and $v$ are defined as before and poles $\Xi = \{\xi_1, \xi_2,\dots\}$, with $\xi_k \in \Cb = \{\mathbb{C}\cup {\infty}\}$. If the $k$th pole is finite, a shift-invert operator $(\nu_k A-\mu_k I)^{-1}$ expands the subspace $\K_k(A,v,\{\xi_i\}_{i=1}^{k-1})$. The ratio $\mu_k/\nu_k = \xi_k$, which is the $k$th pole. We call this a pole since it is the pole of the shift-invert operator $(A-\xi_kI)^{-1}$. If the $k$th pole is infinite, multiplication with $A$ expands the subspace.
	First the single-matrix representation of the orthogonal projection onto a rational Krylov subspace is considered in Section \ref{sec:OrthExt_single} and afterwards the pencil representation of this projection is discussed in Section \ref{sec:OrthExt_pair} for a Hessenberg pencil, and in Section \ref{sec:inv_Hess} for an inv-Hessenberg pencil. 
	
	\subsection{Single-matrix representation}\label{sec:OrthExt_single}
	Consider an orthonormal nested basis $V_n \in \mathbb{C}^{m \times n}$ for $\K_n(A,v;\Xi)$, with $A \in \mathbb{C}^{m\times m}$, $v\in \mathbb{C}^m$ and given poles $\Xi$. Orthogonally projecting the matrix $A$ onto $\K_n$ and expressing the result using a single matrix $Z_n$ provides the equation
	\begin{equation}
	V_n^HAV_n=Z_n.
	\end{equation}
	The structure of the \emph{rational Hessenberg matrix} $Z_n$ can be deduced from the choice of poles. It allows for a factorization as $Z_n = QR+D$, where $QR$ forms an extended Hessenberg matrix and $D$ is a diagonal matrix containing the poles of the corresponding rational Krylov subspace \cite{CaMeVa19}. Expansion using a shift-invert operator (finite pole) leads to an inv-Hessenberg block and expansion using multiplication with $A$ (infinite pole) leads to a Hessenberg block \cite{AuMaRoVaWa18}. Example \ref{example:blocks} illustrates this.
	\begin{example}\label{example:blocks}
		Consider the extended Krylov subspace corresponding to the example from before $Z = C_1C_4C_5C_6C_3C_2R$, shown in Figure \ref{fig:QR_example}, $$\K_{7}=\spn\{v,Av,A^{-1}v,A^{-2}v,A^2v,A^3v,A^4v \}.$$
		The corresponding poles are $\Xi = \{\infty, 0, 0, \infty, \infty , \infty\}$.\\
		If the poles are chosen as $\Xi = \{\infty, \xi_2 = \frac{\mu_2}{\nu_2}, \xi_3 = \frac{\mu_3}{\nu_3}, \infty, \infty , \infty\}$ , the space constructed is $$\spn\{v,Av,(\nu_2 A-\mu_2 I)^{-1}v,(\nu_2 A-\mu_2 I)^{-1}(\nu_3 A-\mu_3 I)^{-1}v,A^2v,A^3v,A^4v \}$$ and the decomposition becomes $Z=QR+D$ as shown on figure \ref{fig:QRplusD}, where $\xi_2$ and $\xi_3$ are the poles and the remaining elements of $D$ can be chosen freely.
			\begin{figure}[!ht]
				\centering
				{\begin{tikzpicture} 
\node[text width=3cm] at (-2.4,0) {{$\textrm{struct}(Z)=$}};
 \matrix (M) [matrix of nodes,left delimiter={[},right delimiter={]}] 
{$\times$&$\times$&$\times$&$\times$&$\times$&$\times$&$\times$\\
$\times$&$\times$&$\times$&$\times$&$\times$&$\times$&$\times$\\
&$\times$&$\times$&$\times$&$\times$&$\times$&$\times$\\
&$\times$&$\times$&$\times$&$\times$&$\times$&$\times$\\
&$\times$&$\times$&$\times$&$\times$&$\times$&$\times$\\
&&&&$\times$&$\times$&$\times$\\
&&&&&$\times$&$\times$\\
};
\draw [thick,dashed] (M-2-1.center) -- (M-1-1.center); 
\draw [thick,dashed] (M-2-1.center) -- (M-3-2.center); 
\draw [thick,dashed] (M-3-2.center) -- (M-3-3.center); 
\draw [thick,dashed] (M-3-3.center) -- (M-1-3.center); 
\draw [thick,dashed] (M-1-3.center) -- (M-1-1.center); 

\draw [thick,dotted] (M-2-2.center) -- (M-5-2.center); 
\draw [thick,dotted] (M-5-5.center) -- (M-5-4.center); 
\draw [thick,dotted] (M-5-4.center) -- (M-5-2.center); 
\draw [thick] (M-2-2.center) -- (M-2-5.center); 
\draw [thick] (M-2-5.center) -- (M-5-5.center); 
\draw [thick,dotted] (M-2-2.center) -- (M-5-5.center); 

\draw [thick,dashed] (M-5-4.center) -- (M-4-4.center); 
\draw [thick,dashed] (M-4-4.center) -- (M-4-7.center); 
\draw [thick,dashed] (M-4-7.center) -- (M-7-7.center); 
\draw [thick,dashed] (M-7-7.center) -- (M-7-6.center); 
\draw [thick,dashed] (M-7-6.center) -- (M-5-4.center); 
 
\node[text width=3cm] at (3.9,0) {{$+$}};

\matrix (D) [matrix of nodes,left delimiter={[},right delimiter={]}]  at (5,0)
{$\times$&\\
	&$\times$\\
	&&$\xi_2$\\
	&&&$\xi_3$\\
	&&&&$\times$\\
	&&&&&$\times$\\
	&&&&&&$\times$\\
};

\end{tikzpicture}}
				\caption{Rational Hessenberg matrix $Z$ corresponding to the projection onto $\K(A,v;\Xi)$ from Example \ref{example:blocks}, with $\Xi = \{\infty, \xi_2,\xi_3,\infty, \infty, \infty \}$.}
				\label{fig:QRplusD}
			\end{figure}
	\end{example}

	The structure of the single-matrix representation can be explained through its link with the Hessenberg pencil representation discussed in Section \ref{sec:OrthExt_pair}, see Camps et al. \cite{CaMeVa19}.

	\subsection{Pencil representation}\label{sec:OrthExt_pair}
	A pencil representation of the projected matrix onto a rational Krylov subspace can be constructed via an Arnoldi iteration. Theorem \ref{theorem:RAI} provides the rational Arnoldi iteration in its most general form, i.e., expansion of the subspace is done with the operator $(\nu_k A-\mu_k I)^{-1} (\rho_k A-\eta_k I)$.
	
	\begin{theorem}[Rational Arnoldi iteration \cite{Ru84}]\label{theorem:RAI}
		Consider a matrix $A\in \mathbb{C}^{m\times m}$ and an orthonormal nested basis $V_n$ for a rational Krylov subspace $\K_n(A,v;\Xi_n)$, where $\Xi_n = \{ \xi_1, \xi_2,\dots,\xi_{n-1}\} \in \bar{\mathbb{C}}$. 
		The recurrence relation to obtain a basis $V_{n+1}$ for $\K_{n+1}(A,v;\Xi_{n+1})$, with $\Xi_{n+1} = \{\Xi_{n},\xi_n\}$,  in matrix form equals
		\begin{equation*}
		AV_{n+1} \underbar{$K$}_n = V_{n+1} \underbar{$H$}_n,
		\end{equation*}
		with $\underbar{$H$}_n$ and $\underbar{$K$}_n$ Hessenberg matrices of size $(n+1)\times n$. The ratio of their subdiagonal elements equals the poles of the rational Krylov subspace $\frac{(\underbar{$K$}_n)_{k+1,k}}{(\underbar{$H$}_n)_{k+1,k}} = \xi_k$, $k=1,2\dots, n$.
	\end{theorem}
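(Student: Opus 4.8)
The plan is to build the basis $V_{n+1}$ column by column, mirroring the classical Arnoldi construction but allowing each expansion step to use the operator $(\nu_k A - \mu_k I)^{-1}(\rho_k A - \eta_k I)$ instead of plain multiplication by $A$. First I would set $v_1 = v$ and, assuming a nested orthonormal basis $V_k = [v_1 \;\cdots\; v_k]$ for $\K_k(A,v;\Xi_k)$ has been produced, I would form the candidate vector $\hat{v} = (\nu_k A - \mu_k I)^{-1}(\rho_k A - \eta_k I) v_k$. By the definition of the rational Krylov subspace in Section \ref{sec:EKS}, this $\hat v$ lies in $\K_{k+1}(A,v;\Xi_{k+1})$, and (absent breakdown, per the standing Remark) it is not in $\K_k$, so orthogonalizing it against $v_1,\dots,v_k$ by a Gram--Schmidt sweep and normalizing yields $v_{k+1}$, completing the nested orthonormal basis.

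Next I would extract the recurrence in matrix form. Rearranging $\hat v = (\nu_k A - \mu_k I)^{-1}(\rho_k A - \eta_k I) v_k$ gives $(\rho_k A - \eta_k I) v_k = (\nu_k A - \mu_k I)\hat v$, i.e. $\rho_k A v_k - \nu_k A \hat v = \eta_k v_k - \mu_k \hat v$, or $A(\rho_k v_k - \nu_k \hat v) = \eta_k v_k - \mu_k \hat v$. Since $\hat v = \sum_{i=1}^{k+1} t_{i,k} v_i$ for the orthogonalization coefficients $t_{i,k}$ (with $t_{k+1,k}\neq 0$), both sides are linear combinations of $v_1,\dots,v_{k+1}$. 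Collecting the coefficient of $v_i$ on the left into the $(i,k)$ entry of a matrix $\underbar{$K$}_n$ and on the right into the $(i,k)$ entry of $\underbar{$H$}_n$, the relation $A(\rho_k v_k - \nu_k\hat v) = \eta_k v_k - \mu_k\hat v$ becomes exactly the $k$th column of $A V_{n+1}\underbar{$K$}_n = V_{n+1}\underbar{$H$}_n$. Because $\hat v$ has no component beyond $v_{k+1}$ and the only column index appearing is $k$ and $k+1$, both $\underbar{$K$}_n$ and $\underbar{$H$}_n$ are $(n+1)\times n$ upper-Hessenberg. Their subdiagonal entries are $(\underbar{$K$}_n)_{k+1,k} = -\nu_k t_{k+1,k}$ and $(\underbar{$H$}_n)_{k+1,k} = -\mu_k t_{k+1,k}$, so their ratio is $\mu_k/\nu_k = \xi_k$, the $k$th pole, as claimed. (When $\xi_k=\infty$ one takes $\nu_k=0$; then $\underbar{$K$}_n$ has a zero subdiagonal entry and the step reduces to ordinary multiplication by $A$, consistent with the convention in Section \ref{sec:EKS}.)

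The main obstacle is bookkeeping rather than depth: one has to check that the two Hessenberg structures are simultaneously valid, i.e. that no column of either matrix reaches below row $k+1$, and that the subdiagonal entries are genuinely nonzero so the recurrence is proper — both of which follow from the no-breakdown assumption guaranteeing $t_{k+1,k}\neq 0$ and from the fact that a single application of the shift-invert-times-polynomial operator advances the subspace dimension by exactly one. A secondary subtlety is the normalization freedom: the pair $(\underbar{$H$}_n,\underbar{$K$}_n)$ is determined only up to column scaling (scaling column $k$ of both by the same constant), which is why the theorem fixes only the \emph{ratio} of subdiagonal entries; I would note this explicitly and pick the normalization $\|v_{k+1}\|_2=1$ used throughout. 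With these points in place the induction closes and the matrix identity holds for all $n$.
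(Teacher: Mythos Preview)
Your proof is correct and follows essentially the same route as the paper: form the Gram--Schmidt expansion with the operator $(\nu_k A-\mu_k I)^{-1}(\rho_k A-\eta_k I)$, clear the shift--invert factor, regroup into the form $A(\,\cdot\,)=(\,\cdot\,)$, and read off the $k$th columns of $\underbar{$K$}_n$ and $\underbar{$H$}_n$ together with their subdiagonal ratio. One small slip: from your expressions $(\underbar{$K$}_n)_{k+1,k}=-\nu_k t_{k+1,k}$ and $(\underbar{$H$}_n)_{k+1,k}=-\mu_k t_{k+1,k}$ the quotient $K/H$ is $\nu_k/\mu_k$, not $\mu_k/\nu_k$; the paper's own proof (and its later Theorem~\ref{theorem:Q-theorem} and Lemma~\ref{lemma:subdiag}) in fact record the pole as $H/K=\mu_k/\nu_k=\xi_k$, so the orientation in the theorem statement is a typo and your computation agrees with the intended claim.
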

	\begin{proof}
		Consider the formula for expanding the Krylov subspace $\K_k(A,v;\Xi)$ by multiplication with $(\nu_k A-\mu_k I)^{-1} (\rho_k A-\eta_k I)$, the subspace is invariant under the shift operator $(\rho_k A-\eta_k I)$. Afterwards orthogonalization with respect to all vectors in the current basis $V_k = \begin{bmatrix}
		v_1 & v_2 & \cdots & v_k
		\end{bmatrix}$ is done using $h_{ik}$, $i=1,\dots,k$ and normalization using $h_{k+1,k}$. This leads to a Gram-Schmidt orthogonalization procedure
		\begin{equation}\label{eq:expand_RKS}
		h_{k+1,k} v_{k+1} = (\nu_k A-\mu_k I)^{-1} (\rho_k A-\eta_k I) v_k-h_{1k} v_1 - \dots -h_{kk}v_k.
		\end{equation}
		Rewriting \eqref{eq:expand_RKS} reveals the $k$th column of matrices $\underbar{$H$}_k$ and $\underbar{$K$}_k$
		\begin{align*}
		(\nu_k A-\mu_k I) h_{k+1,k} v_{k+1} &= (\rho_k A-\eta_k I)v_k - (\nu_k A-\mu_k I) \sum_{i=1}^{k} h_{ik} v_i\\
		\nu_k A h_{k+1,k} v_{k+1} + \nu_k A \sum_{i=1}^{k} h_{ik}v_i - \rho_k A v_k &= -\eta_k v_k + \mu_k \sum_{i=1}^{k} h_{ik} v_i + \mu_k h_{k+1,k} v_{k+1}\\
		A\left( (\nu_k \sum_{i=1}^{k+1} h_{ik}v_i) - \rho_k v_k\right) &= \mu_k \left(\sum_{i=1}^{k+1}h_{ik} v_i\right) - \eta_k v_k\\
		A \nu_k \begin{bmatrix}
		v_1 & \dots &v_k & v_{k+1} 	\end{bmatrix} \begin{bmatrix}
		h_{1k} \\
		\vdots \\
		h_{kk} - \rho_k/\nu_k \\
		h_{k+1,k} 	\end{bmatrix} &= \mu_k \begin{bmatrix}
		v_1 & \dots &v_k & v_{k+1} 	\end{bmatrix} \begin{bmatrix}
		h_{1k} \\
		\vdots \\
		h_{kk} - \eta_k/\mu_k \\
		h_{k+1,k} 	\end{bmatrix}.
		\end{align*}
		The last equation reveals that the subdiagonal element ratio is $\frac{\mu_k h_{k+1,k}}{\nu_k h_{k+1,k}} = \frac{\mu_k}{\nu_k} =\xi_k$.
	\end{proof}
	From Theorem \ref{theorem:RAI} we obtain a Hessenberg pencil $(H_n,K_n)$, satisfying $Z_n=H_nK_n^{-1}$ with $K_n$ nonsingular, which represents the projection
	\begin{equation}\label{eq:EKS_proj_pair}
	V_n^HAV_nK_n=H_n.
	\end{equation}
	Such a Hessenberg pencil will be called proper if it has no subdiagonal elements $h_{i+1,i}$ and $k_{i+1,i}$ simultaneously zero.
	Theorem \ref{theorem:RAI} implies that $\underbar{$H$}_n$ and $\underbar{$K$}_n$ are linked, their subdiagonal ratios reveal the poles of the rational Krylov subspace from which they originate. These ratios are, however, invariant when $\underbar{$H$}_n$ and $\underbar{$K$}_n$ are both multiplied with an upper-triangular matrix $R$ from the right, illustrating that the Hessenberg pencil $(\underbar{$H$}_n,\underbar{$K$}_n)$ is not unique.

	An implicit Q-theorem for matrix-pencils $(\underbar{$H$}_n,\underbar{$K$}_n)$ exists, if the poles and starting vector are chosen and the structure of the matrices in this pencil is fixed. If the structure of the matrices is chosen to be Hessenberg, then the implicit Q-theorem can be found in the dissertation of Berljafa \cite{Be17}, the paper by Berljafa et al. \cite{BeGu15} and the paper of Camps et al. \cite{CaMeVa19}. Theorem \ref{theorem:Q-theorem} states this result and shows a one-to-one relation between Hessenberg pencils and rational Krylov subspaces, therefore manipulating poles in the pencil corresponds to manipulating the subspaces.
		\begin{theorem}[Rational implicit Q-theorem \cite{Be17,BeGu15,CaMeVa19}]\label{theorem:Q-theorem}
			Consider a decomposition of the form
			\begin{equation*}
			AV_{n+1} \underbar{$K$}_n = V_{n+1} \underbar{$H_n$}
			\end{equation*}
			with $(n+1)\times n$ Hessenberg matrices $\underbar{$H$}_n$ and $\underbar{$K$}_n$, poles $\xi_i = \frac{h_{i+1,i}}{k_{i+1,i}}$, for $i=1,\dots,n$ and $V_{n+1}$ an orthonormal nested basis for the rational Krylov subspace $\K_{n+1}(A,v;\Xi)$, where $v = V_{n+1}e_1$ the first column of $V_{n+1}$ and $\Xi = \{\xi_1,\xi_2,\dots,\xi_n \}$ the set of poles.\\
			Then the Hessenberg pencil $(\underbar{$H$}_n,\underbar{$K$}_n)$ and the orthonormal matrix $V_{n+1}$ are essentially uniquely determined by the starting vector $v$ and the poles $\Xi$.
		\end{theorem}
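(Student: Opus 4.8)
The plan is to argue by induction on the column index; existence of such a decomposition is guaranteed by the rational Arnoldi iteration (Theorem~\ref{theorem:RAI}), so only the uniqueness claim needs proof. The crux is to show that the nested spaces carried by $V_{n+1}$ are forced. Write $V_{n+1}=[\,v_1\ \cdots\ v_{n+1}\,]$, $\mathcal{S}_k=\spn\{v_1,\dots,v_k\}$ and $\Xi_k=\{\xi_1,\dots,\xi_{k-1}\}$; the key claim is $\mathcal{S}_k=\mathcal{K}_k(A,v;\Xi_k)$ for every $k$. Once this is in hand, essential uniqueness of $V_{n+1}$ and of the pencil follows from standard facts. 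Throughout I would use the paper's blanket no-breakdown assumption, so that $\dim\mathcal{K}_j(A,v;\Xi_j)=j$, the pencil is proper (for each $k$ the pair $((\underline{H}_n)_{k+1,k},(\underline{K}_n)_{k+1,k})$ is not $(0,0)$, so $\xi_k=\mu_k/\nu_k$ with $(\mu_k,\nu_k)\neq(0,0)$), and $\nu_kA-\mu_kI$ is invertible for finite $\xi_k$ (a pole is never an eigenvalue of $A$).

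For the induction the base case is $\mathcal{S}_1=\spn\{v\}=\mathcal{K}_1$. Assuming $\mathcal{S}_k=\mathcal{K}_k(A,v;\Xi_k)$, I would read off the $k$th column of $AV_{n+1}\underline{K}_n=V_{n+1}\underline{H}_n$ and use the Hessenberg shape of both matrices to get $Ap_k=q_k$ with $p_k=\sum_{i=1}^{k+1}(\underline{K}_n)_{i,k}v_i$ and $q_k=\sum_{i=1}^{k+1}(\underline{H}_n)_{i,k}v_i$. Writing $(\underline{H}_n)_{k+1,k}=\mu_k\tau$, $(\underline{K}_n)_{k+1,k}=\nu_k\tau$ with $\tau\neq0$, the decisive observation is that
\begin{equation*}
(\nu_kA-\mu_kI)p_k=\nu_kq_k-\mu_kp_k=\sum_{i=1}^{k+1}\bigl(\nu_k(\underline{H}_n)_{i,k}-\mu_k(\underline{K}_n)_{i,k}\bigr)v_i
\end{equation*}
has vanishing $v_{k+1}$-coefficient, hence lies in $\mathcal{S}_k=\mathcal{K}_k$. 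For finite $\xi_k$ this gives $p_k\in(\nu_kA-\mu_kI)^{-1}\mathcal{K}_k\subseteq\mathcal{K}_{k+1}$, and because $(\underline{K}_n)_{k+1,k}\neq0$ while the lower-order part of $p_k$ lies in $\mathcal{S}_k\subseteq\mathcal{K}_{k+1}$, one recovers $v_{k+1}\in\mathcal{K}_{k+1}$; for $\xi_k=\infty$ instead $p_k\in\mathcal{S}_k=\mathcal{K}_k$ and $q_k=Ap_k\in A\mathcal{K}_k\subseteq\mathcal{K}_{k+1}$, and solving for $v_{k+1}$ in $q_k$ (now $(\underline{H}_n)_{k+1,k}\neq0$) again yields $v_{k+1}\in\mathcal{K}_{k+1}$. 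Either way $\mathcal{S}_{k+1}\subseteq\mathcal{K}_{k+1}$, and since the columns of $V_{n+1}$ are independent and $\dim\mathcal{K}_{k+1}=k+1$, the inclusion is an equality.

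Having pinned down the chain $\mathcal{K}_1\subset\cdots\subset\mathcal{K}_{n+1}$, which depends only on $A$, $v$ and $\Xi$, I would finish as follows. A nested orthonormal basis of a fixed chain is unique up to multiplying each basis vector by a unimodular scalar (each $v_{k+1}$ spans the one-dimensional space $\mathcal{K}_{k+1}\ominus\mathcal{K}_k$), so $V_{n+1}$ is determined up to right multiplication by a unitary diagonal matrix. Absorbing those phases into a rescaling of the rows of $\underline{H}_n$ and $\underline{K}_n$ (which preserves both the Hessenberg shape and the pole ratios), two admissible pencils for the same $V_{n+1}$ must, by the column-wise argument above, have $k$th columns agreeing up to a nonzero scalar and a combination of earlier columns; hence they differ by simultaneous right multiplication by an invertible upper-triangular matrix, exactly the non-uniqueness already noted below \eqref{eq:EKS_proj_pair}. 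These scaling ambiguities are what "essentially uniquely" refers to.

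The hard part will be the inductive step, and inside it the inclusion $(\nu_kA-\mu_kI)^{-1}\mathcal{K}_k\subseteq\mathcal{K}_{k+1}$: making this rigorous needs the description of the rational Krylov subspace as $\mathcal{K}_{k+1}=q_k(A)^{-1}\mathcal{K}_{k+1}(A,v)$ with $q_k$ the product over the finite poles in $\Xi_{k+1}$, together with the fact that poles avoid the eigenvalues of $A$ — and it must be combined with a careful, uniform treatment of finite versus infinite poles so that the bookkeeping of which $\xi_j$ enters $q_k$ stays consistent. The dimension counts that upgrade the inclusions $\mathcal{S}_{k+1}\subseteq\mathcal{K}_{k+1}$ to equalities, and that bound the residual freedom in the pencil, are precisely where the no-breakdown hypothesis is used.
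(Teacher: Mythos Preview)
The paper does not actually prove Theorem~\ref{theorem:Q-theorem}; it is stated as a cited result from \cite{Be17,BeGu15,CaMeVa19}, so there is no in-paper proof to compare against. Your argument---induction on the column index to force $\mathcal{S}_k=\mathcal{K}_k(A,v;\Xi_k)$ via the vanishing of the $v_{k+1}$-coefficient in $(\nu_kA-\mu_kI)p_k$, followed by the standard essential-uniqueness reasoning for nested orthonormal bases and the residual upper-triangular freedom in the pencil---is sound and is essentially the approach taken in those references (see in particular Berljafa's thesis \cite{Be17}). Your identification of the inclusion $(\nu_kA-\mu_kI)^{-1}\mathcal{K}_k\subseteq\mathcal{K}_{k+1}$ as the step requiring the polynomial-quotient description $\mathcal{K}_{k+1}=q_k(A)^{-1}\mathcal{K}_{k+1}(A,v)$ is accurate, and the case split on finite versus infinite $\xi_k$ is handled correctly.
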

		
	Note that Theorem \ref{theorem:Q-theorem} states the uniqueness of the Hessenberg pencil. The pencil can, however, also be represented using matrices with another structure than Hessenberg. 
	Since for a nonsingular matrix $C$ the pair $(H_n C,K_n C)$ also satisfies \eqref{eq:EKS_proj_pair}. 	
	Besides the Hessenberg pencil, another important representation is an inv-Hessenberg pencil. This representation is discussed in Section \ref{sec:inv_Hess} and is important for the derivation of the main result of this text provided in Section \ref{sec:BiExt}.

	\subsection{Inv-Hessenberg pencil}\label{sec:inv_Hess}
	An inv-Hessenberg pencil satisfying \eqref{eq:EKS_proj_pair} is constructed in this section.
	\begin{property}[Transfer through property \cite{Va11}]\label{prop:transfer_through}
		A shape of core transformations can be transferred through an upper-triangular matrix $R$ without altering the shape. 
	\end{property}

	\begin{example}\label{example:transfer_through}
		The equality $C_1C_3C_2C_4R=\widetilde{R}\widetilde{C}_1\widetilde{C}_3\widetilde{C}_2\widetilde{C}_4$ holds, where $\widetilde{R}$ is upper-triangular. The matrices involved will generally change (its elements), but the shape, i.e., the mutual ordering of the core transformations (and therefore the structure of the resulting matrix) remains the same.
	\end{example}
	
		\begin{lemma}[Turnover lemma \cite{VaVBMa07}, Lemma 9.38]\label{lemma:shift_through}
			Consider the product of three core transformations $G_{i-1} G_i \hat{G}_{i-1}$.
			Then there exists an equivalent representation $\Gamma_{i} \Gamma_{i-1} \hat{\Gamma}_i$
			\begin{align*}
			\begin{array}{ccc}
			\vspace{-0.07cm}
			\Rc &  & \Rc\\
			\vspace{-0.07cm}
			\rc & \Rc & \rc\\
			\vspace{-0.07cm}
			& \rc
			\end{array}	&=
			\begin{array}{ccc}
			\vspace{-0.07cm}
			& \Rc & \\
			\vspace{-0.07cm}
			\Rc & \rc & \Rc\\
			\vspace{-0.07cm}
			\rc& & \rc
			\end{array}	\\
			G_{i-1} G_{i} \hat{G}_{i-1} & = \Gamma_{i} \Gamma_{i-1} \hat{\Gamma}_i,
			\end{align*}
			with matrices defined as
			\begin{align*}
			G_{i-1} := \begin{bmatrix}
			c_{i-1} & s_{i-1}\\
			-s_{i-1} &c_{i-1}\\
			& & 1
			\end{bmatrix}\qquad
			G_{i} := \begin{bmatrix}
			1 \\
			&c_{i} & s_{i}\\
			&-s_{i} &c_{i}
			\end{bmatrix} \qquad
			\hat{G}_{i-1} := \begin{bmatrix}
			\hat{c}_{i-1} & \hat{s}_{i-1}\\
			-\hat{s}_{i-1} &\hat{c}_{i-1}\\
			& & 1
			\end{bmatrix}\\
			\Gamma_{i} := \begin{bmatrix}
			1 \\
			&\gamma_{i} & \sigma_{i}\\
			&-\sigma_{i} &\gamma_{i}
			\end{bmatrix} \qquad
			{\Gamma}_{i-1} := \begin{bmatrix}
			{\gamma}_{i-1} & \sigma_{i-1}\\
			-{\sigma}_{i-1} &{\gamma}_{i-1}\\
			& & 1
			\end{bmatrix} \qquad
			\hat{\Gamma}_{i} := \begin{bmatrix}
			1 \\
			&\hat{\gamma}_{i} & \hat{\sigma}_{i}\\
			&-\hat{\sigma}_{i} &\hat{\gamma}_{i}
			\end{bmatrix}.
			\end{align*}
		\end{lemma}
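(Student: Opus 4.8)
The plan is to reduce the identity to a statement about $3\times 3$ unitary matrices and then produce the right-hand factorization by successively peeling off core transformations from the left.

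First I would observe that $G_{i-1}$, $G_i$, $\hat G_{i-1}$ and $\Gamma_i$, $\Gamma_{i-1}$, $\hat\Gamma_i$ all reduce to the identity outside the principal block occupying rows and columns $i-1, i, i+1$; whether one reads the lemma literally (the six matrices being the displayed $3\times 3$ blocks) or views them as embedded in a larger identity, the claim therefore reduces to the corresponding equality of $3\times 3$ matrices on that block. So it suffices to show: for the $3\times 3$ unitary matrix $M := G_{i-1}G_i\hat G_{i-1}$ there exist plane rotations $\Gamma_i$ (on rows $2,3$), $\Gamma_{i-1}$ (on rows $1,2$) and $\hat\Gamma_i$ (on rows $2,3$) of the displayed form with $M = \Gamma_i\Gamma_{i-1}\hat\Gamma_i$. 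I would also note $\det M = 1$, since each of the three factors is a rotation of determinant one.

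Next I would construct the three factors by a QR-type elimination. Choose $\Gamma_i$, acting on rows $2,3$, so that $\Gamma_i^{-1}M$ has a zero in position $(3,1)$ --- the standard Givens annihilation step, with the sign fixed so that the $(2,1)$ entry comes out nonnegative --- and set $M' := \Gamma_i^{-1}M$. Then choose $\Gamma_{i-1}$, acting on rows $1,2$, so that $\Gamma_{i-1}^{-1}M'$ has a zero in position $(2,1)$, again fixing the sign so that the $(1,1)$ entry is nonnegative, and set $M'' := \Gamma_{i-1}^{-1}\Gamma_i^{-1}M$. The first column of $M''$ is a unit vector with zero second and third entries and nonnegative first entry, hence equals $e_1$; unitarity then forces its first row to be $e_1$ as well, so $M''$ is block diagonal, equal to $1$ in position $(1,1)$ and a $2\times 2$ unitary matrix $B$ on rows and columns $2,3$. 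Since the two peeled rotations have determinant one, $\det B = \det M = 1$, so $B$ is a plane rotation; after fixing phases it has the form displayed for $\hat\Gamma_i$, so I set $\hat\Gamma_i := M''$. Rearranging gives $M = \Gamma_i\Gamma_{i-1}\hat\Gamma_i$, which (re-embedded if necessary) is the assertion of the lemma.

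The step I expect to require the most care is the degenerate cases of the elimination: if an entry to be annihilated already vanishes, the intermediate rotation is not uniquely determined and one simply takes it to be the identity; and one must verify that the sign (phase) conventions chosen above genuinely leave a $2\times 2$ rotation of the prescribed form in the trailing block rather than a reflection, which is exactly where the determinant bookkeeping enters. Carrying these conventions through the construction --- and, if one wants them, reading closed-form expressions for $\gamma_i, \sigma_i, \dots$ in terms of $c_i, s_i, \dots$ off the explicit $3\times 3$ product --- is routine; the existence of the factorization, which is all that is used later in the paper, follows from the peeling argument above.
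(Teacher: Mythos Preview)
The paper does not supply a proof of this lemma; it is quoted from \cite{VaVBMa07} and used as a black box. Your constructive argument---peeling off $\Gamma_i$ and then $\Gamma_{i-1}$ by Givens eliminations so that the first column of the residual becomes $e_1$, and then reading the trailing $2\times 2$ unitary block as $\hat\Gamma_i$---is the standard proof and is correct; the determinant bookkeeping you flag is exactly what rules out a reflection in the trailing block.

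One small point to make precise: the matrices displayed in the lemma are real Givens rotations, and your conclusion ``$\det B=1$ so $B$ has the displayed form'' is the $SO(2)$ statement appropriate to that real setting. If one reads the lemma over $\mathbb{C}$ with the paper's earlier definition of a core transformation (an arbitrary $2\times 2$ unitary block), then $\det B=1$ alone does not force the literal pattern $\begin{bmatrix}\gamma&\sigma\\-\sigma&\gamma\end{bmatrix}$; but in that reading no such form is needed, since any unitary $2\times 2$ block already counts as a core transformation and the existence assertion follows directly from your elimination. Either way the argument goes through, and since the paper only uses existence, your level of detail is adequate.
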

		
		For ease of notation Theorem \ref{theorem:hessenberg-like_pair} is stated and proved for a full reduction ($n=m$) but is valid for partial reductions as well.

		\begin{theorem}[Inv-Hessenberg pencil for projection onto rational Krylov subspaces]\label{theorem:hessenberg-like_pair}
			Let $V\in \mathbb{C}^{m\times m}$ be an orthonormal nested basis for a rational Krylov subspace $\K_m(A,v;\Xi)$, $A\in \Cmm$, $v \in \Cm$ and poles $\Xi$. Then the orthogonal projection onto this subspace can be represented as two inv-Hessenberg matrices $\Hi,\Ki$, i.e., they satisfy the equation $V^HAV\Ki = \Hi$.
		\end{theorem}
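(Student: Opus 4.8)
The plan is to start from the Hessenberg pencil $(\underbar{$H$}_m,\underbar{$K$}_m)$ guaranteed by Theorem~\ref{theorem:RAI}, which satisfies $V^HAV\underbar{$K$}_m = \underbar{$H$}_m$ (in the full-reduction case the underlined matrices become square $m\times m$ Hessenberg matrices $H$ and $K$, and we write $V^HAVK=H$). The key idea is that we are free to postmultiply the pencil by any nonsingular matrix $C$, since $(HC,KC)$ also satisfies \eqref{eq:EKS_proj_pair}; the goal is therefore to choose $C$ so that both $HC$ and $KC$ become inv-Hessenberg matrices. To make this concrete I would invoke Lemma~\ref{lemma:QR_Hess} to factor the proper Hessenberg matrix $H = C_1C_2\dotsm C_{m-1}R_H$ as a descending pattern times upper-triangular, and similarly $K = \widehat{C}_1\widehat{C}_2\dotsm\widehat{C}_{m-1}R_K$. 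The natural candidate for $C$ is then $R_H^{-1}$ (or a related upper-triangular factor), which turns $H$ into the descending pattern $Q_H = C_1\dotsm C_{m-1}$ times a unitary-times-triangular correction — but of course this same $C$ acts on $K$, and the two QR-factorizations do not share the same triangular factor, so one has to work harder.

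The cleaner route, which I would actually carry out, is to conjugate both sides of the relation to convert the descending pattern into an ascending one. Concretely, write $H = Q_H R_H$ with $Q_H$ a descending pattern; we want to replace $Q_H$ by an ascending pattern. Using the turnover Lemma~\ref{lemma:shift_through} repeatedly, a descending pattern of core transformations can be rearranged, at the cost of introducing a new upper-triangular factor on the other side (Property~\ref{prop:transfer_through}, the transfer-through property), into an ascending pattern: schematically, one peels off core transformations from the top of the descending pattern, passes them through $R_H$ using Property~\ref{prop:transfer_through}, and reassembles them in ascending order, each turnover step being justified by Lemma~\ref{lemma:shift_through}. This yields $H = \widetilde{R} \, (C_{m-1}\dotsm C_2 C_1) \, R_H'$ for suitable upper-triangular $\widetilde{R}, R_H'$; absorbing $\widetilde{R}$ on the left is harmless because $V^H A V$ is being multiplied, and absorbing $R_H'$ on the right is exactly the freedom $(HC,KC)$ we have. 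The same manipulation must be applied simultaneously to $K$, and the point is that the ascending pattern $C_{m-1}\dotsm C_1$ multiplied by an upper-triangular matrix is precisely an inv-Hessenberg matrix by the definition in Section~\ref{sec:structure}.

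The main obstacle — and the step I would spend the most care on — is that $H$ and $K$ must be brought to inv-Hessenberg form by \emph{one and the same} right transformation $C$, so the two reshaping processes cannot be run independently. The way around this is to exploit the rigidity coming from Theorem~\ref{theorem:RAI}: the subdiagonal ratios $h_{i+1,i}/k_{i+1,i} = \xi_i$ are the poles, and they are invariant under common right-multiplication by an upper-triangular matrix. So I would argue that it suffices to find a single upper-triangular $C$ that turns $K$ (say) into inv-Hessenberg form — this is a purely structural statement about a single proper Hessenberg matrix and follows from the turnover/transfer-through argument above — and then show that the \emph{same} $C$ automatically turns $H$ into inv-Hessenberg form, because $H$ and $K$ differ only by the diagonal rescaling encoded in the poles together with an upper-triangular factor (this is the content of the observation following Theorem~\ref{theorem:Q-theorem} that the pencil is determined up to common right-multiplication). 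More carefully, since $(H,K)$ is a proper Hessenberg pencil with fixed pole ratios, writing $K = Q_K R_K$ and choosing $C = R_K^{-1}$ makes $KC = Q_K$ a descending pattern; one then checks that $HC$ has, below its first subdiagonal, the same low-rank structure, because the nonzero subdiagonal of $HC$ is a nonzero multiple (by the poles) of that of $KC$ on each entry and the Hessenberg shape above is preserved — hence $HC$ is also (a rank-one-corrected) descending-pattern times triangular. Finally one applies the turnover reshaping to the common descending pattern to flip it to ascending, completing the construction of $\Hi = HC'$ and $\Ki = KC'$ as inv-Hessenberg matrices; a brief remark extends the argument verbatim to partial reductions $n<m$, as noted before the theorem statement.
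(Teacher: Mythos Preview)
Your proposal has a genuine gap at the core mechanical step. You repeatedly invoke the idea of ``flipping'' a single descending pattern $C_1C_2\cdots C_{m-1}$ into an ascending pattern $C_{m-1}'\cdots C_1'$ by means of turnovers and transfer-through, possibly absorbing upper-triangular factors on either side. This is not possible in general: the turnover lemma (Lemma~\ref{lemma:shift_through}) acts on a V-shaped triple $G_{i-1}G_i\hat G_{i-1}$ and rearranges it into the opposite V; a pure descending sequence $C_1C_2\cdots C_{m-1}$ contains no such configuration, and Property~\ref{prop:transfer_through} explicitly preserves the shape when passing through a triangular factor. Equivalently, $Q_{\mathrm{desc}}R$ is Hessenberg while $Q_{\mathrm{asc}}R'$ is inv-Hessenberg, and these two structures do not coincide for a generic matrix, so no choice of upper-triangular factors can turn one into the other. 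Your later variant --- taking $C=R_K^{-1}$ so that $KC=Q_K$ is a descending pattern --- runs into exactly the same wall: $HC$ is then still Hessenberg, and the final ``flip'' you propose is the same impossible step.

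The paper avoids this by introducing a \emph{second} sequence of core transformations to interact with. It writes both $H$ and $K$ in RQ form, $H=R_HQ_H$ and $K=R_KQ_K$ with $Q_H,Q_K$ descending, and then right-multiplies by $Q_K^H$. This creates the product $Q_HQ_K^H$, a descending pattern followed by an ascending one, and \emph{that} configuration does admit repeated turnovers, rearranging it into (ascending)(descending). The trailing descending factor is then moved back to the left-hand side by a common right-multiplication, leaving both sides of the form (upper-triangular)(ascending pattern), i.e.\ inv-Hessenberg. The essential idea you are missing is that the second Hessenberg matrix $K$ is not just a constraint to be accommodated --- its $Q$-factor is precisely what supplies the extra core transformations needed to make the turnover argument go through.
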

		\begin{proof}
			The existence of a Hessenberg pair $(H,K)$ satisfying $$V^H A V K=H$$ follows immediately from the Arnoldi iteration in Theorem \ref{theorem:RAI}.\\
			From Lemma \ref{lemma:QR_Hess} and Property \ref{prop:transfer_through} it follows that there exist upper-triangular matrices $R_H$ and $R_K$ and unitary matrices consisting of a descending pattern of core transformations $Q_H$ and $Q_K$ such that
			\begin{align*}
			V^HAVR_K Q_K = R_H Q_H.
			\end{align*}
			Let us write it in a manner such that the structures are clear.\\
			\begin{figure}[!ht]
				\centering
				{\begin{tikzpicture} 
$\hspace{-1.1cm}$
$V^HAV$ \hspace{1.25cm} \matrix (RK) [matrix of nodes] 
{$ $&$ $&$ $&$ $&$ $ &$ $&$ $&$ $&$ $\\
$ $&$ $&$ $&$ $&$ $ &$ $&$ $&$ $&$ $\\
$ $&$ $&$ $&$ $&$ $ &$ $&$ $&$ $&$ $\\
$ $&$ $&$ $&$ $&$ $ &$ $&$ $&$ $&$ $\\
$ $&$ $&$ $&$ $&$ $ &$ $&$ $&$ $&$ $\\
$ $&$ $&$ $&$ $&$ $ &$ $&$ $&$ $&$ $\\
$ $&$ $&$ $&$ $&$ $ &$ $&$ $&$ $&$ $\\
$ $&$ $&$ $&$ $&$ $ &$ $&$ $&$ $&$ $\\
$ $&$ $&$ $&$ $&$ $ &$ $&$ $&$ $&$ $\\
};  

\draw [thick] (RK-1-1.center) -- (RK-9-9.center); 
\draw [thick] (RK-1-1.center) -- (RK-1-9.center); 
\draw [thick] (RK-1-9.center) -- (RK-9-9.center); 

\draw (RK-4-5) node[above=-0.45cm,transform canvas={xshift=1em}] {$R_K$};
\node[label=center: {\resizebox{2.7cm}{!}{$\displaystyle
		\begin{array}{ccccc}
		\vspace{-0.07cm}
		\Rc \\
		\vspace{-0.07cm}
		\rc & \Rc\\
		\vspace{-0.07cm}
		&  \rc & \Rc\\
		\vspace{-0.3cm} 
		&	 & \hspace{0.007cm}\rc & & \\
		\vspace{-0.3cm}
		&	 &  & \ddots & \\
		\vspace{-0.07cm}
		&	 &     &  	    & \Rc\\
		&	 &     &  	    & \rc\\
		\end{array}$
}}]() at (2.4,0){};

\node[label=center: {$=$}]() at (4,0){};

\matrix (RH) [matrix of nodes]  at (5.5,0)
{$ $&$ $&$ $&$ $&$ $ &$ $&$ $&$ $&$ $\\
	$ $&$ $&$ $&$ $&$ $ &$ $&$ $&$ $&$ $\\
	$ $&$ $&$ $&$ $&$ $ &$ $&$ $&$ $&$ $\\
	$ $&$ $&$ $&$ $&$ $ &$ $&$ $&$ $&$ $\\
	$ $&$ $&$ $&$ $&$ $ &$ $&$ $&$ $&$ $\\
	$ $&$ $&$ $&$ $&$ $ &$ $&$ $&$ $&$ $\\
	$ $&$ $&$ $&$ $&$ $ &$ $&$ $&$ $&$ $\\
	$ $&$ $&$ $&$ $&$ $ &$ $&$ $&$ $&$ $\\
	$ $&$ $&$ $&$ $&$ $ &$ $&$ $&$ $&$ $\\
};  

\draw [thick] (RH-1-1.center) -- (RH-9-9.center); 
\draw [thick] (RH-1-1.center) -- (RH-1-9.center); 
\draw [thick] (RH-1-9.center) -- (RH-9-9.center); 

\draw (RH-4-5) node[above=-0.45cm,transform canvas={xshift=1em}] {$R_H$};
\node[label=center: {\resizebox{2.7cm}{!}{$\displaystyle
		\begin{array}{ccccc}
		\vspace{-0.07cm}
		\Rc \\
		\vspace{-0.07cm}
		\rc & \Rc\\
		\vspace{-0.07cm}
		&  \rc & \Rc\\
		\vspace{-0.3cm} 
		&	 & \hspace{0.007cm}\rc & & \\
		\vspace{-0.3cm}
		&	 &  & \ddots & \\
		\vspace{-0.07cm}
		&	 &     &  	    & \Rc\\
		&	 &     &  	    & \rc\\
		\end{array}$
}}]() at (8,0){};
\end{tikzpicture}}
			\end{figure}
		
			\noindent Multiply from the right with $Q_K^H$ annihilate the descending pattern of core transformations on the left-hand side.\\
			\begin{figure}[!ht]
				\centering
				{\begin{tikzpicture} 
$\hspace{-1.1cm}$
$V^HAV$ \hspace{1.25cm} \matrix (RK) [matrix of nodes] 
{$ $&$ $&$ $&$ $&$ $ &$ $&$ $&$ $&$ $\\
$ $&$ $&$ $&$ $&$ $ &$ $&$ $&$ $&$ $\\
$ $&$ $&$ $&$ $&$ $ &$ $&$ $&$ $&$ $\\
$ $&$ $&$ $&$ $&$ $ &$ $&$ $&$ $&$ $\\
$ $&$ $&$ $&$ $&$ $ &$ $&$ $&$ $&$ $\\
$ $&$ $&$ $&$ $&$ $ &$ $&$ $&$ $&$ $\\
$ $&$ $&$ $&$ $&$ $ &$ $&$ $&$ $&$ $\\
$ $&$ $&$ $&$ $&$ $ &$ $&$ $&$ $&$ $\\
$ $&$ $&$ $&$ $&$ $ &$ $&$ $&$ $&$ $\\
};  

\draw [thick] (RK-1-1.center) -- (RK-9-9.center); 
\draw [thick] (RK-1-1.center) -- (RK-1-9.center); 
\draw [thick] (RK-1-9.center) -- (RK-9-9.center); 

\draw (RK-4-5) node[above=-0.45cm,transform canvas={xshift=1em}] {$R_K$};

\node[label=center: {$=$}]() at (1.8,0){};

\matrix (RH) [matrix of nodes]  at (3.3,0)
{$ $&$ $&$ $&$ $&$ $ &$ $&$ $&$ $&$ $\\
	$ $&$ $&$ $&$ $&$ $ &$ $&$ $&$ $&$ $\\
	$ $&$ $&$ $&$ $&$ $ &$ $&$ $&$ $&$ $\\
	$ $&$ $&$ $&$ $&$ $ &$ $&$ $&$ $&$ $\\
	$ $&$ $&$ $&$ $&$ $ &$ $&$ $&$ $&$ $\\
	$ $&$ $&$ $&$ $&$ $ &$ $&$ $&$ $&$ $\\
	$ $&$ $&$ $&$ $&$ $ &$ $&$ $&$ $&$ $\\
	$ $&$ $&$ $&$ $&$ $ &$ $&$ $&$ $&$ $\\
	$ $&$ $&$ $&$ $&$ $ &$ $&$ $&$ $&$ $\\
};  

\draw [thick] (RH-1-1.center) -- (RH-9-9.center); 
\draw [thick] (RH-1-1.center) -- (RH-1-9.center); 
\draw [thick] (RH-1-9.center) -- (RH-9-9.center); 

\draw (RH-4-5) node[above=-0.45cm,transform canvas={xshift=1em}] {$R_H$};
\node[label=center: {\resizebox{2.7cm}{!}{$\displaystyle
		\begin{array}{ccccc}
		\vspace{-0.07cm}
		\Rc \\
		\vspace{-0.07cm}
		\rc & \Rc\\
		\vspace{-0.07cm}
		&  \rc & \Rc\\
		\vspace{-0.3cm} 
		&	 & \rc & & \\
		\vspace{-0.3cm}
		&	 &  & \ddots & \\
		\vspace{-0.07cm}
		&	 &     &  	    & \Rc\\
		&	 &     &  	    & \rc\\
		\end{array}$
}}]() at (5.8,0){};

\node[label=center: {\resizebox{2.7cm}{!}{$\displaystyle
		\begin{array}{ccccc}
		\vspace{-0.07cm}
		& & & & \Rc \\
		\vspace{-0.07cm}
		& &  & \Rc &\rc\\
		\vspace{-0.07cm}
		& & \Rc & \rc \\
		\vspace{-0.3cm}
		& & \rc \\
		\vspace{-0.3cm}
		 & \iddots \\
		 \vspace{-0.07cm}
		 \Rc & \\
		 \rc 
		\end{array}$
}}]() at (8.2,0){};

\end{tikzpicture}}
			\end{figure}
		
			\noindent Using the turnover operation repeatedly, see \cite{VaVBMa07} for details, it is possible to rearrange the core transformations to obtain another shape.\\
			\begin{figure}[!ht]
				\centering
				{\begin{tikzpicture} 
$AV$ \hspace{1.25cm} \matrix (RK) [matrix of nodes] 
{$ $&$ $&$ $&$ $&$ $ &$ $&$ $&$ $&$ $\\
$ $&$ $&$ $&$ $&$ $ &$ $&$ $&$ $&$ $\\
$ $&$ $&$ $&$ $&$ $ &$ $&$ $&$ $&$ $\\
$ $&$ $&$ $&$ $&$ $ &$ $&$ $&$ $&$ $\\
$ $&$ $&$ $&$ $&$ $ &$ $&$ $&$ $&$ $\\
$ $&$ $&$ $&$ $&$ $ &$ $&$ $&$ $&$ $\\
$ $&$ $&$ $&$ $&$ $ &$ $&$ $&$ $&$ $\\
$ $&$ $&$ $&$ $&$ $ &$ $&$ $&$ $&$ $\\
$ $&$ $&$ $&$ $&$ $ &$ $&$ $&$ $&$ $\\
};  

\draw [thick] (RK-1-1.center) -- (RK-9-9.center); 
\draw [thick] (RK-1-1.center) -- (RK-1-9.center); 
\draw [thick] (RK-1-9.center) -- (RK-9-9.center); 

\draw (RK-4-5) node[above=-0.45cm,transform canvas={xshift=1em}] {$R_K$};

\node[label=center: {$=$}]() at (1.8,0){};

\matrix (RH) [matrix of nodes]  at (3.3,0)
{$ $&$ $&$ $&$ $&$ $ &$ $&$ $&$ $&$ $\\
	$ $&$ $&$ $&$ $&$ $ &$ $&$ $&$ $&$ $\\
	$ $&$ $&$ $&$ $&$ $ &$ $&$ $&$ $&$ $\\
	$ $&$ $&$ $&$ $&$ $ &$ $&$ $&$ $&$ $\\
	$ $&$ $&$ $&$ $&$ $ &$ $&$ $&$ $&$ $\\
	$ $&$ $&$ $&$ $&$ $ &$ $&$ $&$ $&$ $\\
	$ $&$ $&$ $&$ $&$ $ &$ $&$ $&$ $&$ $\\
	$ $&$ $&$ $&$ $&$ $ &$ $&$ $&$ $&$ $\\
	$ $&$ $&$ $&$ $&$ $ &$ $&$ $&$ $&$ $\\
};  

\draw [thick] (RH-1-1.center) -- (RH-9-9.center); 
\draw [thick] (RH-1-1.center) -- (RH-1-9.center); 
\draw [thick] (RH-1-9.center) -- (RH-9-9.center); 

\draw (RH-4-5) node[above=-0.45cm,transform canvas={xshift=1em}] {$R_H$};
\node[label=center: {\resizebox{2.7cm}{!}{$\displaystyle
		\begin{array}{ccccc}
		\vspace{-0.07cm}
		\Rc \\
		\vspace{-0.07cm}
		\rc & \Rc\\
		\vspace{-0.07cm}
		&  \rc & \Rc\\
		\vspace{-0.3cm} 
		&	 & \rc & & \\
		\vspace{-0.3cm}
		&	 &  & \ddots & \\
		\vspace{-0.07cm}
		&	 &     &  	    & \Rc\\
		&	 &     &  	    & \rc\\
		\end{array}$
}}]() at (8.2,0){};

\node[label=center: {\resizebox{2.7cm}{!}{$\displaystyle
		\begin{array}{ccccc}
		\vspace{-0.07cm}
		& & & & \Rc \\
		\vspace{-0.07cm}
		& &  & \Rc &\rc\\
		\vspace{-0.07cm}
		& & \Rc & \rc \\
		\vspace{-0.3cm}
		& & \rc \\
		\vspace{-0.3cm}
		 & \iddots \\
		 \vspace{-0.07cm}
		 \Rc & \\
		 \rc 
		\end{array}$
}}]() at (5.8,0){};

\end{tikzpicture}}
			\end{figure}
		
			\noindent Multiplying from the right to annihilate the descending pattern of core transformations brings them back to the left-hand side.
			\begin{figure}[!ht]
				\centering
				{\begin{tikzpicture} 
$AV$ \hspace{1.25cm} \matrix (RK) [matrix of nodes] 
{$ $&$ $&$ $&$ $&$ $ &$ $&$ $&$ $&$ $\\
$ $&$ $&$ $&$ $&$ $ &$ $&$ $&$ $&$ $\\
$ $&$ $&$ $&$ $&$ $ &$ $&$ $&$ $&$ $\\
$ $&$ $&$ $&$ $&$ $ &$ $&$ $&$ $&$ $\\
$ $&$ $&$ $&$ $&$ $ &$ $&$ $&$ $&$ $\\
$ $&$ $&$ $&$ $&$ $ &$ $&$ $&$ $&$ $\\
$ $&$ $&$ $&$ $&$ $ &$ $&$ $&$ $&$ $\\
$ $&$ $&$ $&$ $&$ $ &$ $&$ $&$ $&$ $\\
$ $&$ $&$ $&$ $&$ $ &$ $&$ $&$ $&$ $\\
};  

\draw [thick] (RK-1-1.center) -- (RK-9-9.center); 
\draw [thick] (RK-1-1.center) -- (RK-1-9.center); 
\draw [thick] (RK-1-9.center) -- (RK-9-9.center); 

\draw (RK-4-5) node[above=-0.45cm,transform canvas={xshift=1em}] {$R_K$};
\node[label=center: {\resizebox{2.7cm}{!}{$\displaystyle
		\begin{array}{ccccc}
		\vspace{-0.07cm}
		& & & & \Rc \\
		\vspace{-0.07cm}
		& &  & \Rc &\rc\\
		\vspace{-0.07cm}
		& & \Rc & \rc \\
		\vspace{-0.3cm}
		& & \rc \\
		\vspace{-0.3cm}
		& \iddots \\
		\vspace{-0.07cm}
		\Rc & \\
		\rc 
		\end{array}$
}}]() at (2.4,0){};

\node[label=center: {$=$}]() at (4,0){};

\matrix (RH) [matrix of nodes]  at (5.5,0)
{$ $&$ $&$ $&$ $&$ $ &$ $&$ $&$ $&$ $\\
	$ $&$ $&$ $&$ $&$ $ &$ $&$ $&$ $&$ $\\
	$ $&$ $&$ $&$ $&$ $ &$ $&$ $&$ $&$ $\\
	$ $&$ $&$ $&$ $&$ $ &$ $&$ $&$ $&$ $\\
	$ $&$ $&$ $&$ $&$ $ &$ $&$ $&$ $&$ $\\
	$ $&$ $&$ $&$ $&$ $ &$ $&$ $&$ $&$ $\\
	$ $&$ $&$ $&$ $&$ $ &$ $&$ $&$ $&$ $\\
	$ $&$ $&$ $&$ $&$ $ &$ $&$ $&$ $&$ $\\
	$ $&$ $&$ $&$ $&$ $ &$ $&$ $&$ $&$ $\\
};  

\draw [thick] (RH-1-1.center) -- (RH-9-9.center); 
\draw [thick] (RH-1-1.center) -- (RH-1-9.center); 
\draw [thick] (RH-1-9.center) -- (RH-9-9.center); 

\draw (RH-4-5) node[above=-0.45cm,transform canvas={xshift=1em}] {$R_H$};
\node[label=center: {\resizebox{2.7cm}{!}{$\displaystyle
		\begin{array}{ccccc}
		\vspace{-0.07cm}
		& & & & \Rc \\
		\vspace{-0.07cm}
		& &  & \Rc &\rc\\
		\vspace{-0.07cm}
		& & \Rc & \rc \\
		\vspace{-0.3cm}
		& & \rc \\
		\vspace{-0.3cm}
		& \iddots \\
		\vspace{-0.07cm}
		\Rc & \\
		\rc 
		\end{array}$
}}]() at (8.0,0){};

\end{tikzpicture}}
			\end{figure}
			\newpage
			\noindent Using the notation from before, where dashed lines indicate low-rank structure.
			\begin{figure}[H]
			\centering
			{\begin{tikzpicture} 

\node[label=center: {}]() at (5,0){};
$\hspace{-1.1cm}$
$V^HAV$ \hspace{1.25cm} \matrix (RK) [matrix of nodes] 
{$ $&$ $&$ $&$ $&$ $ &$ $&$ $&$ $&$ $\\
$ $&$ $&$ $&$ $&$ $ &$ $&$ $&$ $&$ $\\
$ $&$ $&$ $&$ $&$ $ &$ $&$ $&$ $&$ $\\
$ $&$ $&$ $&$ $&$ $ &$ $&$ $&$ $&$ $\\
$ $&$ $&$ $&$ $&$ $ &$ $&$ $&$ $&$ $\\
$ $&$ $&$ $&$ $&$ $ &$ $&$ $&$ $&$ $\\
$ $&$ $&$ $&$ $&$ $ &$ $&$ $&$ $&$ $\\
$ $&$ $&$ $&$ $&$ $ &$ $&$ $&$ $&$ $\\
$ $&$ $&$ $&$ $&$ $ &$ $&$ $&$ $&$ $\\
};

\draw [thick] (RK-1-1.center) -- (RK-1-9.center); 
\draw [thick] (RK-1-9.center) -- (RK-9-9.center); 
\draw [thick, dashed] (RK-1-1.center) -- (RK-9-1.center); 
\draw [thick, dashed] (RK-9-1.center) -- (RK-9-9.center); 
\draw [thick, dashed] (RK-1-1.center) -- (RK-9-9.center); 
\draw (RK-4-5) node[above=-0.45cm,transform canvas={xshift=1em}] {$\Ki$};

\node[label=center: {$=$}]() at (1.7,0){};

\matrix (RH) [matrix of nodes]  at (3.5,0)
{$ $&$ $&$ $&$ $&$ $ &$ $&$ $&$ $&$ $\\
	$ $&$ $&$ $&$ $&$ $ &$ $&$ $&$ $&$ $\\
	$ $&$ $&$ $&$ $&$ $ &$ $&$ $&$ $&$ $\\
	$ $&$ $&$ $&$ $&$ $ &$ $&$ $&$ $&$ $\\
	$ $&$ $&$ $&$ $&$ $ &$ $&$ $&$ $&$ $\\
	$ $&$ $&$ $&$ $&$ $ &$ $&$ $&$ $&$ $\\
	$ $&$ $&$ $&$ $&$ $ &$ $&$ $&$ $&$ $\\
	$ $&$ $&$ $&$ $&$ $ &$ $&$ $&$ $&$ $\\
	$ $&$ $&$ $&$ $&$ $ &$ $&$ $&$ $&$ $\\
};  

\draw [thick] (RH-1-1.center) -- (RH-1-9.center); 
\draw [thick] (RH-1-9.center) -- (RH-9-9.center); 
\draw [thick, dashed] (RH-1-1.center) -- (RH-9-1.center); 
\draw [thick, dashed] (RH-9-1.center) -- (RH-9-9.center); 
\draw [thick, dashed] (RH-1-1.center) -- (RH-9-9.center);

\draw (RH-4-5) node[above=-0.45cm,transform canvas={xshift=1em}] {$\Hi$};

\end{tikzpicture}}
		\end{figure}		
			\noindent Hence, we have constructed an inv-Hessenberg pair $(\Hi,\Ki)$ which satisfies the matrix equality
			$$
			V^HAV\Ki=\Hi.\qquad \qedhere
			$$ 
		\end{proof}	
  \subsection{Connection to other factorizations} \label{sec:structure_orth_summary}
	Theorem \ref{theorem:hessenberg-like_pair}, Theorem \ref{theorem:RAI} and the result from Section \ref{sec:OrthExt_single} are summarized in Table \ref{table:structure_orthog}.
		\begin{table}[!ht]
			\centering
		\begin{tabular}[t]{l|l}
			$V^HAV=Z$                & $V^HAVK=H$\\
			\hline
			 \begin{tikzpicture} 
\matrix (HV) [matrix of nodes]
{$ $&$ $ &$ $&$ $ &$ $\\
	$ $&$ $ &$ $&$ $ &$ $\\
	$ $&$ $ &$ $&$ $ &$ $\\
	$ $&$ $ &$ $&$ $ &$ $\\
	$ $&$ $ &$ $&$ $ &$ $\\
};  

\draw [dashed,thick] (HV-2-1.center) -- (HV-5-4.center); 
\draw [dashed,thick] (HV-2-1.center) -- (HV-5-1.center); 
\draw [dashed,thick] (HV-5-1.center) -- (HV-5-4.center); 
\draw [thick] (HV-1-1.center) -- (HV-1-5.center); 
\draw [thick] (HV-1-5.center) -- (HV-5-5.center); 
\draw [thick] (HV-2-1.center) -- (HV-1-1.center); 
\draw [thick] (HV-5-4.center) -- (HV-5-5.center); 
\end{tikzpicture} & \multicolumn{1}{l}{\makecell[l]{\begin{tikzpicture} 
\matrix (HV) [matrix of nodes]
{$ $&$ $ &$ $&$ $ &$ $\\
	$ $&$ $ &$ $&$ $ &$ $\\
	$ $&$ $ &$ $&$ $ &$ $\\
	$ $&$ $ &$ $&$ $ &$ $\\
	$ $&$ $ &$ $&$ $ &$ $\\
};  

\draw [thick] (HV-1-1.center) -- (HV-2-1.center); 
\draw [thick] (HV-2-1.center) -- (HV-5-4.center); 
\draw [thick] (HV-5-4.center) -- (HV-5-5.center); 
\draw [thick] (HV-1-1.center) -- (HV-1-5.center); 
\draw [thick] (HV-1-5.center) -- (HV-5-5.center); 

\matrix (HW) [matrix of nodes] at (1.3,0)
{$ $&$ $ &$ $&$ $ &$ $\\
	$ $&$ $ &$ $&$ $ &$ $\\
	$ $&$ $ &$ $&$ $ &$ $\\
	$ $&$ $ &$ $&$ $ &$ $\\
	$ $&$ $ &$ $&$ $ &$ $\\
};  

\draw [thick] (HW-1-1.center) -- (HW-2-1.center); 
\draw [thick] (HW-2-1.center) -- (HW-5-4.center); 
\draw [thick] (HW-5-4.center) -- (HW-5-5.center); 
\draw [thick] (HW-1-1.center) -- (HW-1-5.center); 
\draw [thick] (HW-1-5.center) -- (HW-5-5.center); 

\end{tikzpicture}\\ \begin{tikzpicture} 
\matrix (HV) [matrix of nodes]
{$ $&$ $ &$ $&$ $ &$ $\\
	$ $&$ $ &$ $&$ $ &$ $\\
	$ $&$ $ &$ $&$ $ &$ $\\
	$ $&$ $ &$ $&$ $ &$ $\\
	$ $&$ $ &$ $&$ $ &$ $\\
};  

\draw [dashed,thick] (HV-1-1.center) -- (HV-5-5.center); 
\draw [dashed,thick] (HV-1-1.center) -- (HV-5-1.center); 
\draw [dashed,thick] (HV-5-1.center) -- (HV-5-5.center); 
\draw [thick] (HV-1-1.center) -- (HV-1-5.center); 
\draw [thick] (HV-1-5.center) -- (HV-5-5.center); 

\matrix (HW) [matrix of nodes] at (1.3,0)
{$ $&$ $ &$ $&$ $ &$ $\\
	$ $&$ $ &$ $&$ $ &$ $\\
	$ $&$ $ &$ $&$ $ &$ $\\
	$ $&$ $ &$ $&$ $ &$ $\\
	$ $&$ $ &$ $&$ $ &$ $\\
};

\draw [dashed,thick] (HW-1-1.center) -- (HW-5-5.center); 
\draw [dashed,thick] (HW-1-1.center) -- (HW-5-1.center); 
\draw [dashed,thick] (HW-5-1.center) -- (HW-5-5.center); 
\draw [thick] (HW-1-1.center) -- (HW-1-5.center); 
\draw [thick] (HW-1-5.center) -- (HW-5-5.center); 

\end{tikzpicture}}} 
		\end{tabular}
		\caption{Summary of structures following from orthogonal projections onto a rational Krylov subspace with basis $V$.}
		\label{table:structure_orthog}
	\end{table}
	The structure of the single matrix representation also appeared in a paper by Mach et al. \cite{MaPrVa14} and the Hessenberg pencil representation is a classical result by Ruhe \cite{Ru84}. These results about structure provide the tools for deriving the tridiagonal pencil representation of oblique projection onto rational Krylov subspaces in Section \ref{sec:BiExt}.

	\section{Biorthogonal rational Krylov subspaces}\label{sec:BiExt}
	This section provides results concerning the possible structures of a biorthogonal projection onto rational Krylov subspaces \cite{La50}.\\
	Section \ref{sec:BiExt_single} provides results for the single-matrix representation. The main result of that section has been proven for extended Krylov subspaces by Mach, et al. \cite{MaVBVa14}.
	Section \ref{sec:BiExt_pair} provides novel results concerning structure of the pencil representation, elegantly generalizing the tridiagonal structure obtained by nonhermitian Lanczos.
	Section \ref{sec:structure_biorth_summary} provides an overview of the structures which are generalized by the results in this section. Based on the tridiagonal pencil a Lanczos iteration for rational Krylov subspaces is developed in Section \ref{sec:RatLan}.

	\subsection{Single-matrix representation}\label{sec:BiExt_single}
	The structure of a biorthogonal projection expressed as a single matrix $Z_n=W_n^HAV_n$ is given in Theorem \ref{theorem:BiExt_single}. Here $V_n$ and $W_n$ are  biorthogonal, i.e., $W_n^HV_n = I$, bases for two rational Krylov subspaces $\K(A,v;\Xi)$ and $\Ls(A^H,w;\varPhi)$, respectively, with $A\in \Cmm$, $v,w \in \Cm$ and $\Xi$, $\varPhi$ two (possibly) independent sets of poles.\\
	For the sake of readability the corresponding proof and all subsequent proofs are provided for a full reduction (i.e., $n=m$).\\
	The structure of $Z_n$ can be deduced using matrix factorizations rather than relying on orthogonality of the basisvectors for the subspaces, see, e.g., \cite{Wa93,JaRe11,Sc17}.\\
	
	\begin{theorem} [Structure of biorthogonal projection in single-matrix representation]\label{theorem:BiExt_single}
		Consider $A\in \mathbb{C}^{m\times m}$ and rational Krylov subspaces $\K(A,v;\Xi)$ and $\Ls(A^H,w;\varPhi)$ with biorthogonal bases $V,W \in \mathbb{C}^{m\times m}$, respectively. $\Xi$ and $\varPhi$ are two sets containing poles that are not in the spectrum of $A$. Under the assumption that no breakdowns occur, the biorthogonal projection
		$$
		W^H A V =Z,
		$$
		where $Z$ has the structure below its diagonal determined by the poles of $\K$ and the structure above its diagonal determined by poles of $\Ls$.
	\end{theorem}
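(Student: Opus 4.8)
The plan is to sidestep orthogonality altogether and instead write $Z$ as a product of two structured factors: once exploiting the recurrence that generates $\K$, which will control the part below the diagonal, and once exploiting the recurrence that generates $\Ls$, which will control the part above. Concretely, I would pick an orthonormal nested basis $Q_V$ for $\K(A,v;\Xi)$; since $V$ and $Q_V$ are nested bases of the same chain of subspaces, $V=Q_VR_V$ with $R_V$ upper triangular and, under the no-breakdown assumption, nonsingular. Theorem~\ref{theorem:RAI} applied to $Q_V$ (full reduction) gives a Hessenberg pencil $(H_0,K_0)$ with $AQ_VK_0=Q_VH_0$ and subdiagonal ratios equal to the poles $\xi_i$; substituting $Q_V=VR_V^{-1}$ yields $AVK=VH$ with $K:=R_V^{-1}K_0$ and $H:=R_V^{-1}H_0$ again Hessenberg (upper-triangular times Hessenberg) and with the same subdiagonal ratios. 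With $K$ nonsingular, $AV=VHK^{-1}$, and biorthogonality $W^HV=I$ collapses the projection to
\[
Z=W^HAV=W^HV\,HK^{-1}=HK^{-1}=R_V^{-1}\bigl(Q_V^HAQ_V\bigr)R_V .
\]

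Next I would read off the structure below the diagonal. The matrix $Z_0:=Q_V^HAQ_V=H_0K_0^{-1}$ is the orthogonal rational Hessenberg matrix of Section~\ref{sec:OrthExt_single}: it factors as $Z_0=\widetilde Q\widetilde R+\widetilde D$, where $\widetilde Q\widetilde R$ is an extended Hessenberg matrix whose shape is dictated by $\Xi$ (an inv-Hessenberg block at each finite pole, a Hessenberg block at each infinite pole) and $\widetilde D$ is diagonal carrying the finite poles, see Camps et al.~\cite{CaMeVa19}. It then remains to check that conjugating by the upper-triangular $R_V$ does not disturb this. Using the transfer-through Property~\ref{prop:transfer_through} to write $R_V^{-1}\widetilde Q=\overline Q\,\overline R$ with $\overline Q$ of the same shape as $\widetilde Q$, one obtains $Z=HK^{-1}=\overline Q\,(\overline R\widetilde RR_V)+R_V^{-1}\widetilde DR_V$, an extended Hessenberg matrix of the same shape plus an upper-triangular term; the latter does not reach below the diagonal, and its diagonal entries coincide with those of $\widetilde D$, so the finite poles survive on the diagonal of $Z$. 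Hence the part of $Z$ strictly below its diagonal has exactly the sparsity-and-low-rank pattern prescribed by the poles of $\K$.

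The structure above the diagonal then follows by symmetry. Taking conjugate transposes, $Z^H=V^HA^HW$, and $W^HV=I$ gives $V^HW=I$, hence $W^{-1}=V^H$ and $Z^H=W^{-1}A^HW$. Since $W$ is a nested basis of the rational Krylov subspace $\Ls(A^H,w;\varPhi)$ for the operator $A^H$, the two preceding paragraphs apply verbatim with $A\mapsto A^H$, $V\mapsto W$, $Q_V\mapsto Q_W$, $\Xi\mapsto\varPhi$, showing that $Z^H=W^{-1}A^HW$ is, strictly below its diagonal, patterned by $\varPhi$. Transposing back, $Z$ is, strictly above its diagonal, patterned by the poles of $\Ls$, and combining the two halves gives the theorem. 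The one genuinely delicate point — the main obstacle — is the middle step: showing that the below-diagonal structure of $HK^{-1}=V^{-1}AV$ depends only on $\Xi$ and not on the particular non-orthonormal basis $V$. This rests on invoking the pencil-to-single-matrix structure of Section~\ref{sec:OrthExt_single} together with the transfer-through Property~\ref{prop:transfer_through} (and, should the shape need rearranging, the turnover Lemma~\ref{lemma:shift_through}), and on keeping careful track of the diagonal term so that the poles land in the right positions. Poles at $0$ or $\infty$ and repeated poles only alter the concrete shape, not the reasoning, and the no-breakdown hypothesis guarantees $H$, $K$, $V$ and $W$ are all invertible.
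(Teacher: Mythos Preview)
Your argument is correct and follows essentially the same route as the paper: relate the biorthogonal $Z$ to the orthogonal projection $Z_0=Q_V^HAQ_V$ via conjugation by an upper-triangular change of basis, then note that this preserves the below-diagonal structure, and handle the above-diagonal part by the symmetric argument on $Z^H$. The paper reaches $Z=RZ_VR^{-1}$ (and $Z^H=L^HZ_WL^{-H}$) directly from $AV=VZ$ and the LR-decomposition of $\hat W^H\hat V$, and simply asserts that upper-triangular multiplication preserves lower-triangular structure; your detour through the Hessenberg pencil and explicit appeal to the transfer-through property is more verbose but not a genuinely different idea.
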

	\begin{proof}
		A similar proof appears in \cite{MaVBVa14}. The proof is added for completeness.
		Consider the matrices $Z_V$ and $Z_W$
		\begin{align*}
		Z_V =\hat{V}^HA\hat{V}, \quad
		Z_W =\hat{W}^HA^H\hat{W},
		\end{align*}
		where $\hat{V}$ and $\hat{W}$ are orthogonal bases for the rational Krylov subspaces $\K(A,v;\Xi)$ and $\Ls(A^H,w;\varPhi)$, respectively, with $w^Hv=1$.
		In general for the orthogonal bases $\hat{W}^H\hat{V} \neq I$, taking the non-pivoted LR-decomposition of the matrix product $\hat{W}^H\hat{V}$ will allow us to construct biorthogonal bases $V$ and $W$. The non-pivoted LR-decomposition, consisting of a lower triangular $L$ and upper triangular $R$, will retain the nestedness of the bases $\hat{V}$ and $\hat{W}$ and will make the bases orthogonal to each other:
		\begin{align*}
		\hat{W}^H\hat{V} = LR\\
		\underbrace{L^{-1}\hat{W}^H}_{=:W^H}\underbrace{\hat{V}R^{-1}}_{=:V} = I\\
		W^HV =I.
		\end{align*}
		This decomposition exists if and only if $\hat{W}^H\hat{V}$ is strongly nonsingular, otherwise it will break at the first singular principal minor. This break corresponds to a breakdown and is typical for biorthogonal methods. In this case the structural results hold up to the occurrence of the breakdown.
		The structure of $Z$ can be derived as follows. First consider
		\begin{align*}
		AV&=VZ\\
		A\underbrace{VR}_{\hat{V}} &= \underbrace{VR}_{\hat{V}} R^{-1} Z R\\
		A\hat{V}  &= \hat{V} \underbrace{R^{-1} Z R}_{Z_V}
		\end{align*}
		which provides the equality
		\begin{equation}\label{eq:single-matrix_ZV}
		Z = RZ_VR^{-1}.
		\end{equation}
		Second consider the relations
		\begin{align*}
		A^HW&=WZ^H\\
		A^H\underbrace{WL^H}_{\hat{W}} &= \underbrace{WL^H}_{\hat{W}} L^{-H} Z^H L^H\\
		A^H\hat{W}  &= \hat{W} \underbrace{L^{-H} Z^H L^H}_{Z_W}
		\end{align*}
		which provides the equality
		\begin{equation}\label{eq:single-matrix_ZW}
		Z^H = L^H Z_W L^{-H}.
		\end{equation}
		Multiplication with an upper-triangular matrix preserves the structure in the lower triangular part. Hence, the structure of $Z$ is the same as the structure of $Z_V$ for the lower triangular part, following from \eqref{eq:single-matrix_ZV}. The structure of the upper triangular part of $Z$ is the same as the structure of the lower triangular part of $Z_W$, following from \eqref{eq:single-matrix_ZW}.
	\end{proof}
	Theorem \ref{theorem:BiExt_single} is illustrated by Example \ref{ex:rat_single} for extended Krylov subspaces.
	\begin{example} \label{ex:rat_single} Consider $A\in \mathbb{C}^{8\times 8}$ and extended Krylov subspaces
		\begin{align*}
		\K_8&=\spn\{v,Av,A^2v,A^3v,A^4v,A^{-1}v,A^5v,A^{-2}v\},\\
		\Ls_8&=\spn\{w,(A^H)^{-1}w,A^Hw,(A^H)^{-2}w,(A^H)^{-3}w,(A^H)^{-4}w,(A^H)^2w,(A^H)^3w\}.
		\end{align*}
		Orthogonal projection onto these subspaces results in matrices $Z_V$ for $\K_8$ and $Z_W$ for $\Ls_8$ and biorthogonal projection onto $\K_8$ and $\Ls_8$ results in $Z$. The structure of these matrices is shown in Figure \ref{fig:example3}. In case of rational Krylov subspaces we only have to include a diagonal matrix containing the poles. Note the extended Hessenberg structure for the orthogonal projections and the same structure appearing in the biorthogonal projection, but now below as well as above the diagonal. Black lines are added to highlight the structure.

		\begin{figure}[!ht]
			\centering
			\begin{subfigure}[b]{0.25\textwidth}
				\resizebox{3.1cm}{2.5cm}{	\begin{tikzpicture} 
 \matrix (M) [matrix of nodes,left delimiter={[},right delimiter={]}] 
{$\times$&$\times$&$\times$&$\times$&$\times$&$\times$&$\times$&$\times$\\
$\times$&$\times$&$\times$&$\times$&$\times$&$\times$&$\times$&$\times$\\
&$\times$&$\times$&$\times$&$\times$&$\times$&$\times$&$\times$\\
&&$\times$&$\times$&$\times$&$\times$&$\times$&$\times$\\
&&&$\times$&$\times$&$\times$&$\times$&$\times$\\
&&&&$\times$&$\times$&$\times$&$\times$\\
&&&&$\times$&$\times$&$\times$&$\times$\\
&&&&&&$\times$&$\times$\\
};
\matrix (M) [matrix of nodes,left delimiter={[},right delimiter={]}] 
{$\times$&$\times$&$\times$&$\times$&$\times$&$\times$&$\times$&$\times$\\
	$\times$&$\times$&$\times$&$\times$&$\times$&$\times$&$\times$&$\times$\\
	&$\times$&$\times$&$\times$&$\times$&$\times$&$\times$&$\times$\\
	&&$\times$&$\times$&$\times$&$\times$&$\times$&$\times$\\
	&&&$\times$&$\times$&$\times$&$\times$&$\times$\\
	&&&&$\times$&$\times$&$\times$&$\times$\\
	&&&&$\times$&$\times$&$\times$&$\times$\\
	&&&&&&$\times$&$\times$\\
};
 \draw [line width=0.25mm] (M-2-1.center) -- (M-3-2.center); 
\draw [line width=0.25mm] (M-3-2.center) -- (M-4-3.center); 
\draw [line width=0.25mm] (M-4-3.center) -- (M-5-4.center); 
\draw [line width=0.25mm] (M-5-4.center) -- (M-6-5.center); 
\draw [line width=0.25mm,dashed] (M-6-5.center) -- (M-7-5.center); 
\draw [line width=0.25mm,dashed] (M-7-5.center) -- (M-7-6.center); 
\draw [line width=0.25mm] (M-7-6.center) -- (M-8-7.center); 
\end{tikzpicture}}
				\caption{$\hat{V}^HA\hat{V} = Z_V$}
			\end{subfigure}
			\hspace{0.4cm}
			\begin{subfigure}[b]{0.25\textwidth}
				\resizebox{3.1cm}{2.5cm}{	\begin{tikzpicture} 
 \matrix (M) [matrix of nodes,left delimiter={[},right delimiter={]}] 
{$\times$&$\times$&$\times$&$\times$&$\times$&$\times$&$\times$&$\times$\\
$\times$&$\times$&$\times$&$\times$&$\times$&$\times$&$\times$&$\times$\\
$\times$&$\times$&$\times$&$\times$&$\times$&$\times$&$\times$&$\times$\\
&&$\times$&$\times$&$\times$&$\times$&$\times$&$\times$\\
&&$\times$&$\times$&$\times$&$\times$&$\times$&$\times$\\
&&$\times$&$\times$&$\times$&$\times$&$\times$&$\times$\\
&&$\times$&$\times$&$\times$&$\times$&$\times$&$\times$\\
&&&&&&$\times$&$\times$\\
};
 \matrix (M) [matrix of nodes,left delimiter={[},right delimiter={]}] 
{$\times$&$\times$&$\times$&$\times$&$\times$&$\times$&$\times$&$\times$\\
	$\times$&$\times$&$\times$&$\times$&$\times$&$\times$&$\times$&$\times$\\
	$\times$&$\times$&$\times$&$\times$&$\times$&$\times$&$\times$&$\times$\\
	&&$\times$&$\times$&$\times$&$\times$&$\times$&$\times$\\
	&&$\times$&$\times$&$\times$&$\times$&$\times$&$\times$\\
	&&$\times$&$\times$&$\times$&$\times$&$\times$&$\times$\\
	&&$\times$&$\times$&$\times$&$\times$&$\times$&$\times$\\
	&&&&&&$\times$&$\times$\\
};
 \draw [line width=0.25mm,dashed] (M-2-1.center) -- (M-3-1.center); 
\draw [line width=0.25mm,dashed] (M-3-1.center) -- (M-3-2.center); 
\draw [line width=0.25mm] (M-3-2.center) -- (M-4-3.center); 
\draw [line width=0.25mm,dashed] (M-4-3.center) -- (M-7-3.center); 
\draw [line width=0.25mm,dashed] (M-7-3.center) -- (M-7-6.center); 
\draw [line width=0.25mm] (M-7-6.center) -- (M-8-7.center); 
\end{tikzpicture}}
				\caption{$\hat{W}^HA^H\hat{W} = Z_W$}
			\end{subfigure}
			\hspace{0.4cm}
			\begin{subfigure}[b]{0.25\textwidth}
				\resizebox{3.1cm}{2.5cm}{	\begin{tikzpicture} 
 \matrix (M) [matrix of nodes,left delimiter={[},right delimiter={]}] 
{$\times$&$\times$&$\times$&&&&&\\
$\times$&$\times$&$\times$&&&&&\\
&$\times$&$\times$&$\times$&$\times$&$\times$&$\times$&\\
&&$\times$&$\times$&$\times$&$\times$&$\times$&\\
&&&$\times$&$\times$&$\times$&$\times$&\\
&&&&$\times$&$\times$&$\times$&\\
&&&&$\times$&$\times$&$\times$&$\times$\\
&&&&&&$\times$&$\times$\\
};
\matrix (M) [matrix of nodes,left delimiter={[},right delimiter={]}] 
{$\times$&$\times$&$\times$&&&&&\\
	$\times$&$\times$&$\times$&&&&&\\
	&$\times$&$\times$&$\times$&$\times$&$\times$&$\times$&\\
	&&$\times$&$\times$&$\times$&$\times$&$\times$&\\
	&&&$\times$&$\times$&$\times$&$\times$&\\
	&&&&$\times$&$\times$&$\times$&\\
	&&&&$\times$&$\times$&$\times$&$\times$\\
	&&&&&&$\times$&$\times$\\
};
 \draw [line width=0.25mm] (M-2-1.center) -- (M-3-2.center); 
\draw [line width=0.25mm] (M-3-2.center) -- (M-4-3.center); 
\draw [line width=0.25mm] (M-4-3.center) -- (M-5-4.center); 
\draw [line width=0.25mm] (M-5-4.center) -- (M-6-5.center); 
\draw [line width=0.25mm,dashed] (M-6-5.center) -- (M-7-5.center); 
\draw [line width=0.25mm,dashed] (M-7-5.center) -- (M-7-6.center); 
\draw [line width=0.25mm] (M-7-6.center) -- (M-8-7.center); 
\draw [line width=0.25mm,dashed] (M-1-2.center) -- (M-1-3.center); 
\draw [line width=0.25mm,dashed] (M-1-3.center) -- (M-2-3.center); 
\draw [line width=0.25mm] (M-2-3.center) -- (M-3-4.center); 
\draw [line width=0.25mm,dashed] (M-3-4.center) -- (M-3-7.center); 
\draw [line width=0.25mm,dashed] (M-3-7.center) -- (M-6-7.center); 
\draw [line width=0.25mm] (M-6-7.center) -- (M-7-8.center); 
\end{tikzpicture}}
				\caption{$W^HAV = Z$}
				\label{fig:biext_Z_single}
			\end{subfigure}
			\caption{Structure of orthogonal and biorthogonal projection onto the extended Krylov subspaces $\K_8$ and $\Ls_8$ in single-matrix representation, see Example \ref{ex:rat_single}.}
			\label{fig:example3}
		\end{figure}
	\end{example}
	The following lemma provides a result for unitary matrices stated by Bunse-Gerstner and Fa{\ss}bender. \cite{BuFa97}, Stewart \cite{St07}, and several others. It follows easily from Theorem \ref{theorem:BiExt_single} and illustrates how the theorem can be used as a general framework to derive structures arising from projection onto Krylov subspaces.
	\begin{lemma}\label{lemma:unitary_single}
			Consider a unitary matrix $U\in \Cmm$, some starting vector $v\in \Cm$ and an orthogonal nested basis $V$ for the standard Krylov subspace $\K_m(U,v)$. Under the assumption that no breakdown occurs, the projection $Z = V^H U V$ has Hessenberg structure below and inv-Hessenberg structure above the diagonal.
	\end{lemma}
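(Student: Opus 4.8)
The plan is to recognize $Z = V^H U V$ as an instance of the biorthogonal single-matrix representation of Theorem~\ref{theorem:BiExt_single}, exploiting two facts special to the unitary case: a square orthonormal basis is unitary and hence biorthogonal to itself, and $(U^H)^{-1}=U$, so a standard Krylov subspace of $U$ is at the same time a rational Krylov subspace of $U^H$ with all poles at the origin.

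Concretely, first I would note that, since no breakdown occurs, the Arnoldi process applied to $U$ with starting vector $v$ produces a nested orthonormal basis $V\in\mathbb{C}^{m\times m}$ of $\K_m(U,v)$, which is the rational Krylov subspace $\K(U,v;\Xi)$ with $\Xi=\{\infty,\infty,\dots\}$. Because $V$ is orthonormal and square, $V^HV=I$, so the pair $(V,W)$ with $W:=V$ is already biorthogonal and the LR-factorization appearing in the proof of Theorem~\ref{theorem:BiExt_single} is trivial (its $L,R$ factors equal the identity, so no breakdown occurs there). Next I would verify that $W=V$ is simultaneously a nested orthonormal basis for $\Ls(U^H,v;\varPhi)$ with $\varPhi=\{0,0,\dots\}$: using $(U^H-0\cdot I)^{-1}=(U^H)^{-1}=U$, for every $k$ one has
\begin{equation*}
\Ls_k(U^H,v;\{0,\dots,0\})=\spn\{v,(U^H)^{-1}v,\dots,(U^H)^{-(k-1)}v\}=\spn\{v,Uv,\dots,U^{k-1}v\}=\K_k(U,v),
\end{equation*}
and these are exactly the nested subspaces spanned by the leading columns of $V$. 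The poles are admissible for Theorem~\ref{theorem:BiExt_single}: $0$ is not in the spectrum of $U$ (unitary, hence invertible), and the infinite poles impose no restriction.

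Applying Theorem~\ref{theorem:BiExt_single} with $A=U$, $w=v$ and the two pole sets above then gives that $Z=W^HUV=V^HUV$ has, below its diagonal, the structure induced by the poles of $\K$, which are all infinite and therefore yield Hessenberg structure, and, above its diagonal, the structure induced by the poles of $\Ls$, which are all finite and therefore yield inv-Hessenberg structure; this is precisely the assertion. The one point requiring care is the identification of $\K_m(U,v)$ with $\Ls_m(U^H,v;\{0,\dots\})$ \emph{as nested subspaces} (not merely as the full space $\mathbb{C}^m$), which is where unitarity of $U$ is used; everything else is bookkeeping. As a sanity check one may also argue directly: $Z$ is a unitary upper-Hessenberg matrix, so $Z^{-1}=Z^H$ is lower Hessenberg, whence the strictly upper-triangular part of $Z$ inherits the rank-one structure characteristic of inv-Hessenberg matrices.
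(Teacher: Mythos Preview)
Your argument is correct and follows essentially the same route as the paper: identify $\K_m(U,v)=\K(U,v;\{\infty,\infty,\dots\})=\Ls(U^H,v;\{0,0,\dots\})$ via $U^{-1}=U^H$, observe that the single orthonormal basis $V$ serves as both members of a biorthogonal pair, and invoke Theorem~\ref{theorem:BiExt_single}. The only additions are your remark that the LR-factorization is trivial here and the closing sanity check via $Z^{-1}=Z^H$, neither of which changes the substance.
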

	\begin{proof}
		Consider a unitary matrix $U$, $U^{-1}=U^H$ and rational (more precisely extended) Krylov subspaces
			\begin{align*}
			\K(U,v;\Xi = \{\infty,\infty,\dots\}),\\
			\K(U^H,v;\varPhi = \{0,0,\dots\}),
			\end{align*}
		with respective orthogonal bases $\hat{V}$ and $\hat{W}$. Since $U^{-1}=U^H$, $\K(U,v;\Xi) = \K(U^H,v;\varPhi) = \mathcal{K}(U,v)$ and therefore $\hat{V}=\hat{W}=:V$ implying $\hat{V}^H\hat{W}=V^HV = I$. Hence, they are simultaneously orthogonal and biorthogonal bases.\\
		Using the knowledge from Section \ref{sec:OrthExt_single} it is clear that the structure of
		\begin{align*}
		Z_V &=\hat{V}^HA\hat{V},\\
		Z_W &= \hat{W}^HA^H\hat{W},
		\end{align*}
		is Hessenberg and inv-Hessenberg, respectively.
		Theorem \ref{theorem:BiExt_single} then states that $Z=\hat{W}^HA\hat{V}=V^HAV$ has Hessenberg structure in its lower triangular part and inv-Hessenberg structure in its upper triangular part.
	\end{proof}
	
	Retrieving the poles of both spaces $\K$ and $\Ls$ from the single-matrix representation $Z$ is possible but rather technical, especially in the parts where the matrix is not of Hessenberg form. Next section discusses the pencil representation which allows a more elegant retrieval of the poles in case a tridiagonal pencil is used.
	\subsection{Pencil representation}\label{sec:BiExt_pair}
	The main contribution of this text is the general pencil structure given in Theorem \ref{theorem:struct_BiRat_pair}. A specific instance is a tridiagonal pencil, which is formulated in Lemma \ref{theorem:tridiag_pair}. As a consequence of the tridiagonal pencil a six-term recurrence relation can be derived for the biorthogonal bases for rational Krylov subspaces in Section \ref{sec:RatLan}.
		\begin{theorem}[Pencil structure of biorthogonal projection onto rational Krylov subspaces]\label{theorem:struct_BiRat_pair}
			Consider $A \in  \Cmm$, two vectors $v,w \in \Cm$ and two rational Krylov subspaces $\K(A,v;\Xi)$ and $\Ls(A^H,w; \varPsi)$, where $\Xi$ and $\varPsi$ are two sets of poles (not in the spectrum of $A$).
			Let $\hat{V}, \hat{W}\in \Cmm$ be orthogonal nested bases and $V,W\in \Cmm$ biorthogonal nested bases for $\K$ and $\Ls$, respectively.
			Then there exist $H,K,H_V,K_V,H_W,K_W$ such that
			\begin{align*}
			\hat{V}^HA \hat{V} K_V &= H_V\\
			\hat{W}^HA^H \hat{W}K_W &= H_W\\
			W^H A V K &= H
			\end{align*}
			and the structure of the pencil $(H,K)$ is related to the structure of the pencils $(H_V,K_V)$ and $(H_W,K_W)$.
			Inverted structure is short for writing that a Hessenberg block becomes an inv-hessenberg block and vice versa, then the structure can be related as
			\begin{itemize}
				\item $H$ has the same structure below its diagonal as $H_V$ and above its diagonal the inverted structure of $K_W$
				\item $K$ has the same structure below its diagonal as $K_V$ and above its diagonal the inverted structure of $H_W$.
			\end{itemize}
	\end{theorem}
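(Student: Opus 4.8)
The plan is to lift the proof of Theorem~\ref{theorem:BiExt_single} from the single matrix $Z$ to the pencil level, using the factorization machinery of Sections~\ref{sec:structure} and~\ref{sec:inv_Hess}. The decompositions $\hat V^HA\hat V K_V = H_V$ and $\hat W^HA^H\hat W K_W = H_W$ are immediately available: they are instances of the rational Arnoldi iteration (Theorem~\ref{theorem:RAI}) applied to $A$ on $\K$ and to $A^H$ on $\Ls$, and, if one prefers an inv-Hessenberg pencil or one of the intermediate ``extended'' pencils obtained by turning over only part of the core transformations, of Theorem~\ref{theorem:hessenberg-like_pair}. In every case an $\infty$-pole contributes a Hessenberg block and a finite pole an inv-Hessenberg block, so the block patterns of $(H_V,K_V)$ below their diagonals are fixed by $\Xi$ and those of $(H_W,K_W)$ by $\varPsi$.

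Next I would pass from the orthogonal bases to the biorthogonal ones exactly as in Theorem~\ref{theorem:BiExt_single}: take the breakdown-free LR factorization $\hat W^H\hat V = LR$, set $V = \hat V R^{-1}$ and $W = \hat W L^{-H}$ so that $W^HV = I$, and record, in the full-reduction case (where $V,W$ are invertible, $AV=VZ$ and $A^HW=WZ^H$ with $Z:=W^HAV$), the two conjugacy relations
\begin{align*}
Z &= RZ_VR^{-1} = R\,H_VK_V^{-1}R^{-1},\\
Z^H &= L^HZ_WL^{-H} = L^H H_WK_W^{-1}L^{-H},
\end{align*}
with $Z_V=\hat V^HA\hat V$ and $Z_W=\hat W^HA^H\hat W$. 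From the first relation $(RH_V,\,RK_V)$ is already a pencil for $Z$, that is $Z\,(RK_V)=RH_V$, and left multiplication by the upper-triangular $R$ preserves the structure below the diagonal; the same holds after right multiplication by any nonsingular upper-triangular $M$. Hence for \emph{every} such $M$ the pair $H=RH_VM$, $K=RK_VM$ satisfies $W^HAVK=H$ and inherits below the diagonal the block patterns of $H_V$ and of $K_V$ respectively, which settles the ``below the diagonal'' half of both assertions.

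The crux is to fix $M$ so that simultaneously $H$ and $K$ acquire \emph{above} their diagonals the inverted block patterns of $K_W$ and of $H_W$, and I would do this with the core-transformation calculus of Theorem~\ref{theorem:hessenberg-like_pair}, applied on both sides of $Z$ at once. By Theorem~\ref{theorem:BiExt_single} the upper-triangular part of $Z$ carries exactly the extended-Hessenberg structure that $Z_W=H_WK_W^{-1}$ has below its diagonal, so $Z$ admits, besides the left representation $Z=Q_{\Xi}R'$ with $Q_{\Xi}$ a shape of core transformations dictated by $\Xi$ and $R'$ upper triangular, a right representation $Z=L'Q_{\varPsi}^{H}$ with $Q_{\varPsi}$ a shape dictated by $\varPsi$ and $L'$ lower triangular. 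Peeling the relevant core transformations of $Q_{\varPsi}^{H}$ off the right of $Z$, transferring them through the triangular factors (Property~\ref{prop:transfer_through}) and splitting them between the $H$- and the $K$-factor by repeated turnovers (Lemma~\ref{lemma:shift_through}) is, exactly as on the left/below side in Theorem~\ref{theorem:hessenberg-like_pair}, what installs the prescribed structure above the diagonal; since a turnover swaps an ascending pattern for a descending one, the block pattern of $K_W$ (resp.\ $H_W$) appears in inverted form above the diagonal of $H$ (resp.\ of $K$), and the pairing of $K_W$ with $H$ and $H_W$ with $K$ is forced by the transposition in the second conjugacy relation together with the fact that in $Z_W=H_WK_W^{-1}$ it is $K_W$ that enters inverted. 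The obstacle I anticipate is carrying out this peeling-and-turnover process for the $H$- and the $K$-factor simultaneously without disturbing the below-diagonal structure already brought into the correct form; this is precisely the two-sided analogue of the one-sided construction in Theorem~\ref{theorem:hessenberg-like_pair}. Once $(H,K)$ is in place, its sub- and superdiagonal ratios recover $\Xi$ and $\varPsi$ as in Theorem~\ref{theorem:RAI}.
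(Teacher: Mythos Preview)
Your setup through the LR factorization and the two conjugacy relations is exactly right, and so is the observation that $(RH_V,RK_V)$ is already a pencil for $Z$ with the correct structure below the diagonal, stable under any upper-triangular right multiplier $M$. The gap is in how you propose to find $M$. Your plan is to work at the level of the single matrix $Z$: exploit the structure that Theorem~\ref{theorem:BiExt_single} installs above its diagonal, peel core transformations off the right of $Z$, and distribute them between the two factors via turnovers. But this is precisely the step you yourself flag as an obstacle, and it is a real one: nothing in that description guarantees that the resulting right multiplier is upper triangular (so the lower structure survives), nor that a \emph{single} multiplier works simultaneously for the $H$- and the $K$-factor.

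The paper avoids this constructive difficulty altogether by staying at the pencil level on the $\Ls$-side as well. Taking the Hermitian conjugate of $V^HA^HW\,L^HK_W=L^HH_W$ gives a \emph{second} pencil for the same $Z$, namely $Z\,(L^{-1}H_W^{-H})=L^{-1}K_W^{-H}$. Two pencils for the same matrix differ by a single nonsingular right multiplier $B$, so
\[
RK_V\,B = L^{-1}H_W^{-H},\qquad RH_V\,B = L^{-1}K_W^{-H}.
\]
Now take an RL decomposition $B=R_BL_B$ and set $K:=RK_VR_B=L^{-1}H_W^{-H}L_B^{-1}$ and $H:=RH_VR_B=L^{-1}K_W^{-H}L_B^{-1}$. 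The left expressions (upper-triangular factors) fix the structure below the diagonal to that of $K_V,H_V$; the right expressions (lower-triangular factors) fix the structure above the diagonal to that of $H_W^{-H},K_W^{-H}$, i.e.\ the inverted structures of $H_W,K_W$. This single algebraic move replaces all of the two-sided core-transformation bookkeeping you were anticipating, and in particular explains why the cross-pairing ($K_W$ with $H$, $H_W$ with $K$) falls out automatically.
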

	
	\begin{proof}
		From the orthogonal bases $\hat{V}$ and $\hat{W}$, the biorthogonal bases $V$ and $W$ can be constructed as in Theorem \ref{theorem:BiExt_single}, i.e., $V:=\hat{V}R^{-1}$ and $W^H:=L^{-1}\hat{W}^H$.
		Substituting the expressions for the biorthogonal bases in the equation of the orthogonal projection \eqref{eq:EKS_proj_pair} provides
		\begin{align*}
		&\begin{cases}
		A\hat{V}K_V = \hat{V}H_V\\
		A^H\hat{W}K_W = \hat{W}H_W
		\end{cases}\\
		\Leftrightarrow&\begin{cases}
		A\hat{V}R^{-1}RK_V = \hat{V}R^{-1}RH_V\\
		A^H\hat{W}L^{-H}L^H K_W = \hat{W}L^{-H}L^H H_W
		\end{cases}\\
		\Leftrightarrow&\begin{cases}
		A V R K_V = V R H_V\\
		A^H W L^H K_W = W L^H H_W
		\end{cases}\\
		\Leftrightarrow& \begin{cases}
		W^H A V R K_V=R H_V\\
		V^H A^H W L^H K_W = L^H H_W
		\end{cases}.
		\end{align*}
		Taking the Hermitian conjugate of the second equation and rewriting it reveals the connection between the matrices at play
		\begin{equation*}
		\begin{cases}
		W^H A V R K_V=R H_V\\
		W^H A V L^{-1} H^{-H}_W = L^{-1} K^{-H}_W
		\end{cases}.
		\end{equation*}
		Since these expressions are only unique up to right multiplication with a nonsingular matrix $B$, we get
		\begin{align*}
		RK_V B = L^{-1} H_W^{-H} \\
		RH_V  B =  L^{-1} K_W^{-H}.
		\end{align*}
		To obtain a particular choice for the structure of $H$ and $K$ it suffices to represent $B$ in its $RL$-decomposition (assuming it exists), where $R$ is an upper-triangular matrix and $L$ a lower-triangular matrix
		\begin{align*}
		&\begin{cases}
		RK_V B = L^{-1} H_W^{-H} \\
		RH_V  B =  L^{-1} K_W^{-H}
		\end{cases}\\
		\Leftrightarrow&\begin{cases}
		RK_V R_B L_B = L^{-1} H_W^{-H}\\
		RH_V  R_B L_B =  L^{-1} K_W^{-H}
		\end{cases}\\
		\Leftrightarrow&\begin{cases}
		RK_V R_B  = L^{-1} H_W^{-H}L_B^{-1}{\color{black}=:K}\\
		RH_V  R_B  =  L^{-1} K_W^{-H} L_B^{-1}{\color{black}=:H}
		\end{cases}.
		\end{align*}
		For the remainder of this proof $H$ and $K$ are defined as in the last equation. Other choices are possible because of the non-uniqueness of the pencil representation.
		Since $R$ and $R_B$ are upper-triangular matrices, they preserve the structure in the lower triangular part. This means that $K$ and $K_V$ have the same lower triangular structure and so do $H$ and $H_V$. On the other hand $K$ shares its upper triangular structure with $H_W^{-H}$ and $H$ with $K_W^{-H}$, since $L$ and $L_B$ are lower-triangular matrices. $\mkern 480mu$
	\end{proof}
	Starting from Theorem \ref{theorem:struct_BiRat_pair} it is straightforward to prove the following lemma.
		\begin{lemma}[Tridiagonal pencil for biorthogonal rational Krylov subspaces]\label{theorem:tridiag_pair}
			Consider some matrix $A\in \Cmm$ and vectors $v,w\in \Cm$. Let $V,W \in \Cmm$ be biorthogonal bases for rational Krylov subspaces $\K(A,v;\Xi)$ and $\Ls(A^H,w;\varPsi)$, where the poles are not in the spectrum of $A$. The equation
			\begin{equation}\label{eq:tridiag_pencil}
			W^HAVS=T
			\end{equation}
			representing the projection onto $\K$ and orthogonal to $\Ls$ is satisfied for a tridiagonal pencil $(T,S)$.
		\end{lemma}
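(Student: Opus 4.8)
The plan is to obtain the lemma as an immediate corollary of Theorem~\ref{theorem:struct_BiRat_pair}, by choosing the two orthogonal pencils so that the structures they force on the biorthogonal pencil are exactly tridiagonal. First I would fix orthogonal nested bases $\hat{V},\hat{W}$ for $\K(A,v;\Xi)$ and $\Ls(A^H,w;\varPsi)$ and pick the \emph{Hessenberg} pencil $(H_V,K_V)$ for $\K$, supplied by the rational Arnoldi iteration of Theorem~\ref{theorem:RAI} applied to $A,v,\Xi$, together with the \emph{inv-Hessenberg} pencil $(H_W,K_W)$ for $\Ls$, supplied by Theorem~\ref{theorem:hessenberg-like_pair} applied to $A^H,w,\varPsi$. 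Both pencils exist because no breakdowns occur and the poles avoid the spectrum of $A$, and $V,W$ are then the biorthogonal bases built from $\hat{V},\hat{W}$ exactly as in Theorem~\ref{theorem:BiExt_single}.

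Next I would feed this choice into Theorem~\ref{theorem:struct_BiRat_pair}. It produces $H,K$ with $W^HAVK=H$ such that $H$ has below its diagonal the structure of $H_V$ and above its diagonal the inverted structure of $K_W$, while $K$ has below its diagonal the structure of $K_V$ and above its diagonal the inverted structure of $H_W$. Since $H_V$ and $K_V$ are Hessenberg, the strictly lower parts of $H$ and $K$ each reduce to a single subdiagonal; since $K_W$ and $H_W$ are inv-Hessenberg, i.e.\ a single inv-Hessenberg block below the diagonal, their inverted structure is a single Hessenberg block, so the strictly upper parts of $H$ and $K$ each reduce to a single superdiagonal. Hence $H$ and $K$ are both tridiagonal, and setting $T:=H$, $S:=K$ yields a tridiagonal pencil satisfying \eqref{eq:tridiag_pencil}. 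It is worth noting that this is the only admissible pairing: taking an inv-Hessenberg pencil for $\K$ would make the lower part of $T$ a full rank-structured triangle rather than a single subdiagonal.

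The main thing to watch is the structural bookkeeping of the last step: one must be sure that a purely Hessenberg factor corresponds, under the ``inverted structure'' dictionary of Theorem~\ref{theorem:struct_BiRat_pair}, to exactly one superdiagonal and nothing wider, and dually that a purely inv-Hessenberg factor contributes exactly one subdiagonal, so that gluing a lone subdiagonal coming from the $\K$-side to a lone superdiagonal coming from the $\Ls$-side gives precisely a tridiagonal matrix with no extra fill-in. This follows from the core-transformation description in Section~\ref{sec:structure}, where a single ascending pair of core transformations is the mirror image of a single descending pair. A secondary, routine point is the existence of the $RL$-decomposition of the auxiliary matrix $B$ appearing in the proof of Theorem~\ref{theorem:struct_BiRat_pair}; under the standing no-breakdown hypothesis it is available, and otherwise the statement holds up to the first breakdown.
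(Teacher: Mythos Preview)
Your proposal is correct and follows exactly the same route as the paper: choose the Hessenberg pencil $(H_V,K_V)$ from Theorem~\ref{theorem:RAI} for $\K$ and the inv-Hessenberg pencil $(H_W,K_W)$ from Theorem~\ref{theorem:hessenberg-like_pair} for $\Ls$, then invoke Theorem~\ref{theorem:struct_BiRat_pair}. Your write-up simply spells out the structural bookkeeping that the paper leaves implicit; the extra remarks about uniqueness of the pairing and the $RL$-decomposition are sound but not needed for the argument.
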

		\begin{proof}
			If $(H_V,K_V)$ is chosen to be a Hessenberg pair (Theorem \ref{theorem:RAI}) and $(H_W,K_W)$ to be an inv-Hessenberg pair (Theorem \ref{theorem:hessenberg-like_pair}), then Theorem \ref{theorem:struct_BiRat_pair} guarantees that $(T,S)$ has tridiagonal structure.
	\end{proof}

	The pencil analogue to Lemma \ref{lemma:unitary_single} can be derived from Theorem \ref{theorem:struct_BiRat_pair}. This illustrates the ease with which structures in pencil form can be derived using this theorem. We stress that this result is only of theoretical use, not practical.
	\begin{lemma} \label{lemma:bidiag_pair}
			Consider a unitary matrix $U\in \Cmm$, some starting vector $v\in \Cm$ and an orthogonal nested basis $V$ for the standard Krylov subspace $\K(U,v)$. Under the assumption that no breakdown occurs, the equation $V^H U V K = H$ is satisfied for a proper lower-bidiagonal and upper-bidiagonal pencil $(H,K)$.
	\end{lemma}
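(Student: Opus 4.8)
\emph{Proof proposal.} The plan is to imitate the proof of Lemma~\ref{lemma:unitary_single}, replacing the single-matrix result Theorem~\ref{theorem:BiExt_single} by its pencil counterpart Theorem~\ref{theorem:struct_BiRat_pair}. Since $U$ is unitary, $U^{-1}=U^H$, so I would work with the two extended (hence rational) Krylov subspaces $\K(U,v;\Xi=\{\infty,\infty,\dots\})$ and $\Ls(U^H,v;\varPsi=\{0,0,\dots\})$. The first is just the standard Krylov subspace $\K(U,v)$. For the second, a zero pole makes the shift--invert operator $(\nu_kU^H-\mu_kI)^{-1}$ with $\mu_k=0$ equal to $\tfrac{1}{\nu_k}U$, whence $\Ls(U^H,v;\{0,\dots\})=\spn\{v,Uv,U^2v,\dots\}=\K(U,v)$ as a chain of nested subspaces as well. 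Consequently a single orthonormal nested basis $V$ serves both spaces, $\hat V=\hat W=V$, and $V^HV=I$ shows that $V$ is simultaneously an orthonormal and a biorthogonal basis.

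Next I would feed into Theorem~\ref{theorem:struct_BiRat_pair} the trivial pencils coming from the single-matrix representations, $(H_V,K_V)=(Z_V,I)$ with $Z_V:=V^HUV$ and $(H_W,K_W)=(Z_W,I)$ with $Z_W:=V^HU^HV$; these obviously satisfy $V^HUV\,K_V=H_V$ and $V^HU^HV\,K_W=H_W$. By the structure theory recalled in Section~\ref{sec:OrthExt_single} (exactly as invoked in the proof of Lemma~\ref{lemma:unitary_single}), the all-infinite poles make $Z_V$ a proper Hessenberg matrix and the all-zero poles make $Z_W$ a proper inv-Hessenberg matrix; equivalently $Z_W=V^HU^HV=(V^HUV)^H=Z_V^H=Z_V^{-1}$, and the inverse of a proper Hessenberg matrix is a proper inv-Hessenberg matrix.

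Applying Theorem~\ref{theorem:struct_BiRat_pair} then yields $H,K$ with $V^HUV\,K=H$, and its conclusion reads off the structure. Below the diagonal $H$ has the structure of $H_V=Z_V$, i.e., a lone subdiagonal, while above the diagonal $H$ has the inverted structure of $K_W=I$, which is empty; hence $H$ is lower bidiagonal. Below the diagonal $K$ has the structure of $K_V=I$, which is empty, while above the diagonal $K$ has the inverted structure of $H_W=Z_W$, and inverting the single inv-Hessenberg block of $Z_W$ produces a Hessenberg block, i.e., a lone superdiagonal; hence $K$ is upper bidiagonal. Properness of the pencil follows from the no-breakdown assumption, which makes the subdiagonal of $Z_V$ and the superdiagonal of $Z_W$ nonzero, while the triangular factors appearing in the proof of Theorem~\ref{theorem:struct_BiRat_pair} rescale but never annihilate these bands; that same assumption guarantees that the $LR$- and $RL$-decompositions used there exist.

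The point that deserves care is the choice of pencil on the $\Ls$-side. The bidiagonal shape hinges on $K_W$ being upper triangular --- this is what forces the strictly upper triangular part of $H$ to vanish --- and on $H_W^{-H}$ being lower Hessenberg --- this is what cuts the upper part of $K$ down to a single superdiagonal. Both hold for the degenerate choice $K_W=I$, $H_W=Z_W$, which is permissible only because the pencil representation in Theorem~\ref{theorem:struct_BiRat_pair} is unique up to right multiplication by a nonsingular matrix; taking instead the Hessenberg pair for $\Ls(U^H,v;\{0,\dots\})$ would leave $K_W$ with a full subdiagonal whose inverted reflection is a rank-one band above the diagonal of $H$, and taking the inv-Hessenberg pair of Theorem~\ref{theorem:hessenberg-like_pair} would plant a superdiagonal above the diagonal of $H$, so in either case bidiagonality is lost. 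Hence the real work is to confirm that this degenerate pencil is admissible in Theorem~\ref{theorem:struct_BiRat_pair} and to trace the block structure through its proof so as to see that $H$ and $K$ genuinely come out bidiagonal and proper, rather than merely structured below and above their diagonals.
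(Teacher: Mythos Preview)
Your approach is essentially the paper's own: identify $\K(U,v;\{\infty,\dots\})=\Ls(U^H,v;\{0,\dots\})$ so that $\hat V=\hat W=V$ is simultaneously orthogonal and biorthogonal, pick $(H_V,K_V)$ with $K_V$ upper-triangular and $H_V$ Hessenberg, pick $(H_W,K_W)$ with $K_W$ upper-triangular and $H_W$ inv-Hessenberg, and read off the bidiagonal shapes from Theorem~\ref{theorem:struct_BiRat_pair}. Your specific instantiation $K_V=K_W=I$, $H_V=Z_V$, $H_W=Z_W$ is simply the degenerate member of the family the paper invokes, and your extra paragraph on properness and on why alternative pencil choices spoil bidiagonality is additional commentary rather than a different route.
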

	\begin{proof} Consider a unitary matrix $U$, $U^{-1}=U^H$ and the same subspaces $\K(U,v;\Xi)$ and $\K(U^H,v;\varPhi)$ as in the proof of Lemma \ref{lemma:unitary_single}, with respective orthogonal bases $\hat{V}$ and $\hat{W}$, note that $\hat{V}=\hat{W}=:V$. \\
		The pencil representation of orthogonal projections onto these subspaces are the following
		\begin{align*}
		\hat{V}^HU\hat{V}K_V = H_V,\\
		\hat{W}^HU^H\hat{W}K_W = H_W.
		\end{align*}
		For $(H_V,K_V)$, consider the standard case: $K_V$ is upper-triangular and $H_V$ is of Hessenberg form.	For $(H_W,K_W)$, choose $H_W$ to be of inv-Hessenberg form and $K_W$ to be upper-triangular.
		Then following from Theorem \ref{theorem:struct_BiRat_pair} the structure of $(H,K)$ is a lower-bidiagonal and upper-bidiagonal pencil.
	\end{proof}

	Lemma \ref{lemma:bidiag_pair} together with Lemma \ref{lemma:unitary_single} shows that a unitary Hessenberg matrix $Z$ can be factorized as the product of a lower-bidiagonal matrix $H$ and the inverse of an upper-bidiagonal matrix $K$ \cite{VaVBMa07}, using the notation from the lemmas.

  \subsection{Connection to other factorizations} \label{sec:structure_biorth_summary}
The results from this section generalize many well-known results such as the nonhermitian Lanczos iteration \cite{La50}, the Hermitian rational Lanczos iteration \cite{DeBu07} , AGR-or CMV-factorization \cite{Wa93} and more recent results by Jagels and Reichel \cite{JaRe11}, Schweitzer \cite{Sc17}, and others. Table \ref{table:structure} visualizes the structures of biorthogonal projection onto rational Krylov subspaces. The main contribution of this paper are the entries on the right, biorthogonal projection onto rational Krylov subspaces for pencil representation. Through Theorem \ref{theorem:struct_BiRat_pair} it is possible to relate the structures appearing in Table \ref{table:structure_orthog} to those in Table \ref{table:structure}.
	\begin{table}[!ht]
		\centering
		\begin{tabular}[t]{l|ll}
			 $W^HAV=Z$                 & \multicolumn{2}{c}{$W^HAVK=H$}                \\
			 \hline
			 \begin{tikzpicture} 
\matrix (HV) [matrix of nodes]
{$ $&$ $ &$ $&$ $ &$ $\\
	$ $&$ $ &$ $&$ $ &$ $\\
	$ $&$ $ &$ $&$ $ &$ $\\
	$ $&$ $ &$ $&$ $ &$ $\\
	$ $&$ $ &$ $&$ $ &$ $\\
};  

\draw [dashed,thick] (HV-2-1.center) -- (HV-5-4.center); 
\draw [dashed,thick] (HV-1-2.center) -- (HV-4-5.center); 
\draw [dashed,thick] (HV-2-1.center) -- (HV-5-1.center); 
\draw [dashed,thick] (HV-5-1.center) -- (HV-5-4.center); 
\draw [dashed,thick] (HV-1-2.center) -- (HV-1-5.center); 
\draw [dashed,thick] (HV-1-5.center) -- (HV-4-5.center); 
\draw [thick] (HV-2-1.center) -- (HV-1-1.center); 
\draw [thick] (HV-1-2.center) -- (HV-1-1.center); 
\draw [thick] (HV-5-4.center) -- (HV-5-5.center); 
\draw [thick] (HV-4-5.center) -- (HV-5-5.center); 
\end{tikzpicture} & 
			  {\makecell[l]{\begin{tikzpicture} 
\matrix (HV) [matrix of nodes]
{$ $&$ $&$ $&$ $\\
	$ $&$ $&$ $&$ $\\
	$ $&$ $&$ $&$ $\\
	$ $&$ $&$ $&$ $\\
};  

\draw [thick] (HV-2-1.center) -- (HV-4-3.center); 
\draw [thick] (HV-1-2.center) -- (HV-3-4.center); 
\draw [thick] (HV-1-2.center) -- (HV-1-1.center); 
\draw [thick] (HV-2-1.center) -- (HV-1-1.center); 
\draw [thick] (HV-4-3.center) -- (HV-4-4.center); 
\draw [thick] (HV-3-4.center) -- (HV-4-4.center); 

\matrix (HW) [matrix of nodes] at (1.3,0)
{$ $&$ $&$ $&$ $\\
	$ $&$ $&$ $&$ $\\
	$ $&$ $&$ $&$ $\\
	$ $&$ $&$ $&$ $\\
};  

\draw [thick] (HW-2-1.center) -- (HW-4-3.center); 
\draw [thick] (HW-1-2.center) -- (HW-3-4.center); 
\draw [thick] (HW-1-2.center) -- (HW-1-1.center); 
\draw [thick] (HW-2-1.center) -- (HW-1-1.center); 
\draw [thick] (HW-4-3.center) -- (HW-4-4.center); 
\draw [thick] (HW-3-4.center) -- (HW-4-4.center);

\end{tikzpicture} \\	\begin{tikzpicture} 
\matrix (HV) [matrix of nodes]
{$ $&$ $&$ $&$ $\\
	$ $&$ $&$ $&$ $\\
	$ $&$ $&$ $&$ $\\
	$ $&$ $&$ $&$ $\\
};  

\draw [thick] (HV-2-1.center) -- (HV-4-3.center); 
\draw [thick] (HV-1-2.center) -- (HV-1-1.center); 
\draw [thick] (HV-2-1.center) -- (HV-1-1.center); 
\draw [thick] (HV-4-3.center) -- (HV-4-4.center); 
\draw [dashed,thick] (HV-1-1.center) -- (HV-4-4.center); 
\draw [dashed,thick] (HV-1-1.center) -- (HV-1-4.center); 
\draw [dashed,thick] (HV-1-4.center) -- (HV-4-4.center); 

\matrix (HW) [matrix of nodes] at (1.3,0)
{$ $&$ $&$ $&$ $\\
	$ $&$ $&$ $&$ $\\
	$ $&$ $&$ $&$ $\\
	$ $&$ $&$ $&$ $\\
};  

\draw [thick] (HW-2-1.center) -- (HW-4-3.center); 
\draw [thick] (HW-1-2.center) -- (HW-1-1.center); 
\draw [thick] (HW-2-1.center) -- (HW-1-1.center); 
\draw [thick] (HW-4-3.center) -- (HW-4-4.center); 
\draw [dashed,thick] (HW-1-1.center) -- (HW-4-4.center); 
\draw [dashed,thick] (HW-1-1.center) -- (HW-1-4.center); 
\draw [dashed,thick] (HW-1-4.center) -- (HW-4-4.center);

\end{tikzpicture}}} &  {\makecell[l]{ \begin{tikzpicture} 
\matrix (HV) [matrix of nodes]
{$ $&$ $&$ $&$ $\\
	$ $&$ $&$ $&$ $\\
	$ $&$ $&$ $&$ $\\
	$ $&$ $&$ $&$ $\\
};  

\draw [thick] (HV-1-2.center) -- (HV-3-4.center); 
\draw [thick] (HV-2-1.center) -- (HV-1-1.center); 
\draw [thick] (HV-1-2.center) -- (HV-1-1.center); 
\draw [thick] (HV-3-4.center) -- (HV-4-4.center); 
\draw [dashed,thick] (HV-1-1.center) -- (HV-4-4.center); 
\draw [dashed,thick] (HV-1-1.center) -- (HV-4-1.center); 
\draw [dashed,thick] (HV-4-1.center) -- (HV-4-4.center); 

\matrix (HW) [matrix of nodes] at (1.3,0)
{$ $&$ $&$ $&$ $\\
	$ $&$ $&$ $&$ $\\
	$ $&$ $&$ $&$ $\\
	$ $&$ $&$ $&$ $\\
};  

\draw [thick] (HW-1-2.center) -- (HW-3-4.center); 
\draw [thick] (HW-2-1.center) -- (HW-1-1.center); 
\draw [thick] (HW-1-2.center) -- (HW-1-1.center); 
\draw [thick] (HW-3-4.center) -- (HW-4-4.center); 
\draw [dashed,thick] (HW-1-1.center) -- (HW-4-4.center); 
\draw [dashed,thick] (HW-1-1.center) -- (HW-4-1.center); 
\draw [dashed,thick] (HW-4-1.center) -- (HW-4-4.center);

\end{tikzpicture}  \\	\begin{tikzpicture} 
\matrix (HV) [matrix of nodes]
{$ $&$ $&$ $&$ $\\
	$ $&$ $&$ $&$ $\\
	$ $&$ $&$ $&$ $\\
	$ $&$ $&$ $&$ $\\
};  

\draw [dashed,thick] (HV-1-1.center) -- (HV-4-4.center); 
\draw [dashed,thick] (HV-1-1.center) -- (HV-1-4.center); 
\draw [dashed,thick] (HV-1-4.center) -- (HV-4-4.center); 
\draw [thick] (HV-1-1.center) -- (HV-4-4.center); 
\draw [dashed,thick] (HV-1-1.center) -- (HV-4-4.center); 
\draw [dashed,thick] (HV-1-1.center) -- (HV-4-1.center); 
\draw [dashed,thick] (HV-4-1.center) -- (HV-4-4.center); 

\matrix (HW) [matrix of nodes] at (1.3,0)
{$ $&$ $&$ $&$ $\\
	$ $&$ $&$ $&$ $\\
	$ $&$ $&$ $&$ $\\
	$ $&$ $&$ $&$ $\\
};

\draw [dashed,thick] (HW-1-1.center) -- (HW-4-4.center); 
\draw [dashed,thick] (HW-1-1.center) -- (HW-1-4.center); 
\draw [dashed,thick] (HW-1-4.center) -- (HW-4-4.center); 
\draw [thick] (HW-1-1.center) -- (HW-4-4.center); 
\draw [dashed,thick] (HW-1-1.center) -- (HW-4-4.center); 
\draw [dashed,thick] (HW-1-1.center) -- (HW-4-1.center); 
\draw [dashed,thick] (HW-4-1.center) -- (HW-4-4.center);

\end{tikzpicture}  }}
		\end{tabular}
		\caption{Summary of pencil structures occurring for biorthogonal projections onto rational Krylov subspaces with bases $V$ and $W$.}
		\label{table:structure}
	\end{table}
	\newpage
\section{Rational Lanczos iteration}\label{sec:RatLan}
		A Lanczos-type iteration which constructs biorthogonal bases for rational Krylov subspaces is tested in this section. Section \ref{sec:poles} states results concerning the appearance of the poles of rational Krylov subspaces as ratios in the tridiagonal pencil \eqref{eq:tridiag_pencil}. These results allow for the development of the Lanczos-type iteration, code implementing this iteration is included as an attachment.
		Some numerical results for this iteration are given in Section \ref{sec:numerics}, these serve as a proof of concept, we have not yet focused on numerical stability. 
		
		\subsection{Lanczos-type iteration}\label{sec:poles}
			For the construction of a Lanczos-type iteration we must know how the poles appear in the tridiagonal pencil \eqref{eq:tridiag_pencil}, Lemma \ref{lemma:subdiag} and Lemma \ref{lemma:superdiag} state this.
			\begin{lemma}\label{lemma:subdiag}
				Assume we have $W^H A V S = T$ as in Theorem \ref{theorem:struct_BiRat_pair}. The ratio of the subdiagonal elements of $(T,S)$ reveals the poles $\Xi = \{\xi_1,\xi_2,\dots,\xi_{m-1} \}$  of $\K(A,v;\Xi)$
				\begin{equation}
				\frac{T_{i+1,i}}{S_{i+1,i}} = \xi_i , \qquad i = 1,2,\dots,m-1.
				\end{equation}
			\end{lemma}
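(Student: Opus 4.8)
The plan is to unwind the construction behind Lemma~\ref{theorem:tridiag_pair} and observe that the subdiagonal ratio is invariant under the triangular transformations used there. Recall from the proof of Theorem~\ref{theorem:struct_BiRat_pair} that, once a Hessenberg pair $(H_V,K_V)$ for $\K(A,v;\Xi)$ (supplied by Theorem~\ref{theorem:RAI}) and an inv-Hessenberg pair $(H_W,K_W)$ for $\Ls(A^H,w;\varPsi)$ are fixed, the pencil delivered by Lemma~\ref{theorem:tridiag_pair} is $(T,S)=(H,K)$ with
$$
T = R\,H_V\,R_B, \qquad S = R\,K_V\,R_B,
$$
where $R$ is the upper-triangular factor of the LR-decomposition of $\hat{W}^H\hat{V}$ and $R_B$ the upper-triangular factor of the $RL$-decomposition of the nonsingular matrix $B$. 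Under the no-breakdown hypothesis both $R$ and $R_B$ are nonsingular upper-triangular matrices, so in particular their diagonal entries are nonzero; the existence and nonsingularity of $B$ (and of its $RL$-decomposition) were already granted in that proof.

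The computational heart is the elementary fact that, for an upper-Hessenberg matrix $M$ and nonsingular upper-triangular matrices $R_1,R_2$, one has $(R_1 M R_2)_{i+1,i} = (R_1)_{i+1,i+1}\,(M)_{i+1,i}\,(R_2)_{i,i}$. I would prove this by a short two-step index chase: writing $(R_1 M)_{i+1,i}=\sum_j (R_1)_{i+1,j}(M)_{j,i}$, the upper-triangularity of $R_1$ kills the terms with $j<i+1$ and the Hessenberg structure of $M$ kills those with $j>i+1$, leaving $(R_1 M)_{i+1,i} = (R_1)_{i+1,i+1}(M)_{i+1,i}$; since $R_1 M$ is again upper-Hessenberg, the same reasoning applied to $(R_1 M)R_2$ peels off the factor $(R_2)_{i,i}$. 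Applying this with $M=H_V$ and with $M=K_V$, and using that $(R)_{i+1,i+1}$ and $(R_B)_{i,i}$ are nonzero, gives
$$
\frac{T_{i+1,i}}{S_{i+1,i}} = \frac{(R)_{i+1,i+1}\,(H_V)_{i+1,i}\,(R_B)_{i,i}}{(R)_{i+1,i+1}\,(K_V)_{i+1,i}\,(R_B)_{i,i}} = \frac{(H_V)_{i+1,i}}{(K_V)_{i+1,i}}.
$$

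To finish, Theorem~\ref{theorem:RAI} identifies precisely this ratio of subdiagonal entries of the Hessenberg pair $(H_V,K_V)$ with the poles of $\K(A,v;\Xi)$, i.e.\ $(H_V)_{i+1,i}/(K_V)_{i+1,i}=\xi_i$ for $i=1,\dots,m-1$, which yields the claim. The identity is to be read in $\Cb$: an infinite pole $\xi_i=\infty$ corresponds to $(K_V)_{i+1,i}=0$, while properness of the Hessenberg pencil guarantees that $(H_V)_{i+1,i}$ and $(K_V)_{i+1,i}$ are never both zero, hence $T_{i+1,i}$ and $S_{i+1,i}$ are never both zero either. I expect no genuine obstacle here: the whole argument is the bookkeeping above, and the only points needing care are the invertibility of the triangular factors $R$ and $R_B$ (the no-breakdown assumption) and the existence of the $RL$-decomposition of $B$, both already in place from the proof of Theorem~\ref{theorem:struct_BiRat_pair}.
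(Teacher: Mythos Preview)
Your proposal is correct and follows essentially the same approach as the paper: both use the relation $T=R\,H_V\,R_B$, $S=R\,K_V\,R_B$ from the proof of Theorem~\ref{theorem:struct_BiRat_pair}, compute the $(i+1,i)$-entry via the upper-triangular/Hessenberg structure, cancel the diagonal factors $R_{i+1,i+1}$ and $(R_B)_{ii}$, and invoke Theorem~\ref{theorem:RAI}. Your version is simply more explicit about the index chase and the $\xi_i=\infty$ case.
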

			\begin{proof} From Theorem \ref{theorem:RAI} it follows that the ratio of the subdiagonal elements of $(H_V,K_V)$ equals the poles
				$$
				\frac{(H_V)_{i+1,i}}{(K_V)_{i+1,i}} = \xi_i ,  \quad i = 1,2,\dots,m-1,
				$$
				and since
				$$
				\frac{T_{i+1,i}}{S_{i+1,i}} = \frac{R_{i+1,i+1} (H_V)_{i+1,i} (R_B)_{i i}}{R_{i+1,i+1} (K_V)_{i+1,i} (R_B)_{i i}} = \frac{(H_V)_{i+1,i}}{(K_V)_{i+1,i}}, \quad  i = 1,2,\dots,m-1,
				$$
				where in the first equality we used a result stated in the proof of Theorem \ref{theorem:struct_BiRat_pair}
				with $R$ and $R_B$ upper-triangular matrices.
			\end{proof}
			
			\begin{lemma}\label{lemma:superdiag}
				Let $(T,S)$ be the tridiagonal pencil satisfying \eqref{eq:tridiag_pencil}. The ratio of the superdiagonal elements of $(T,S)$ reveals the (complex conjugate of the) poles $\varPsi = \{\psi_1,\psi_2,\dots,\psi_{m-2} \}$  of $\Ls(A^H,w;\varPsi)$
				\begin{equation}
				\frac{T_{i,i+1}}{S_{i,i+1}} = \bar{\psi}_{i-1}, \qquad i=2,3,\dots, m-1.
				\end{equation}
			\end{lemma}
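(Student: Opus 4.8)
The plan is to mirror the proof of Lemma~\ref{lemma:subdiag}, working from the $\Ls$-side of the proof of Theorem~\ref{theorem:struct_BiRat_pair} instead of its $\K$-side. That proof produces, besides $T=RH_VR_B$ and $S=RK_VR_B$, the factorizations
\begin{equation*}
T=L^{-1}K_W^{-H}L_B^{-1},\qquad S=L^{-1}H_W^{-H}L_B^{-1},
\end{equation*}
with $L$ and $L_B$ lower triangular and $(H_W,K_W)$ the inv-Hessenberg pencil of the projection onto $\Ls(A^H,w;\varPsi)$ from Theorem~\ref{theorem:hessenberg-like_pair} (this is the choice that makes $(T,S)$ tridiagonal in Lemma~\ref{theorem:tridiag_pair}). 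As the inverse of a proper inv-Hessenberg matrix is a proper upper-Hessenberg matrix, $H_W^{-1}$ and $K_W^{-1}$ are upper-Hessenberg, hence $K_W^{-H}$ and $H_W^{-H}$ are lower-Hessenberg. Left- and right-multiplication by lower triangular matrices leaves the $(i,i+1)$ entry of a lower-Hessenberg matrix equal to $(L^{-1})_{ii}(L_B^{-1})_{i+1,i+1}$ times the original entry, and this scalar is the same for $T$ as for $S$, so exactly as the triangular factors dropped out in Lemma~\ref{lemma:subdiag},
\begin{equation*}
\frac{T_{i,i+1}}{S_{i,i+1}}=\frac{(K_W^{-H})_{i,i+1}}{(H_W^{-H})_{i,i+1}}
=\overline{\left(\frac{(K_W^{-1})_{i+1,i}}{(H_W^{-1})_{i+1,i}}\right)}.
\end{equation*}
It therefore suffices to show that the subdiagonal ratios of the upper-Hessenberg pair $(K_W^{-1},H_W^{-1})$ are $\psi_{i-1}$ for $i=2,\dots,m-1$.

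For that I would trace these ratios back through the construction of the inv-Hessenberg pencil. The starting point is the Hessenberg pencil $(\hat H_W,\hat K_W)$ of the projection of $A^H$ onto the nested basis $\hat W$, for which Theorem~\ref{theorem:RAI} gives $(\hat H_W)_{j+1,j}/(\hat K_W)_{j+1,j}=\psi_j$. The proof of Theorem~\ref{theorem:hessenberg-like_pair} factorizes $\hat H_W$ and $\hat K_W$ into QR form (Lemma~\ref{lemma:QR_Hess}), transfers the core transformations through the triangular factors (Property~\ref{prop:transfer_through}) and uses turnovers (Lemma~\ref{lemma:shift_through}) to turn the descending patterns into ascending ones, arriving at $H_W$ and $K_W$. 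Inverting these turns the ascending patterns back into descending ones; $(K_W^{-1})_{i+1,i}$ and $(H_W^{-1})_{i+1,i}$ then come out as products of diagonal entries of the triangular factors and of the subdiagonal (``sine'') entries of those descending patterns, and one checks that the triangular diagonals cancel in the quotient and what survives is precisely $\psi_{i-1}$. The index is shifted by one and the $(1,2)$ entry drops out because reversing an ascending pattern of length $m-1$ spread over $m$ coordinates is a boundary operation: the first pole $\psi_1$ lands in position $(2,3)$ rather than $(1,2)$, and there is no room left for a last pole in an $m\times m$ pencil.

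The hard part is exactly this matching. For a genuine Hessenberg pencil ``subdiagonal ratio equals pole'' is Theorem~\ref{theorem:RAI}; for the inv-Hessenberg pencil $(H_W,K_W)$ the pole information is threaded through the QR-factorizations, transfer-through and turnover steps of Theorem~\ref{theorem:hessenberg-like_pair}, and one must verify that the inversion needed to pass from $(H_W,K_W)$ to $(K_W^{-1},H_W^{-1})$ does not change the quotient of subdiagonal entries and that the order-reversal produces the stated index shift rather than a full reversal. An alternative is to apply Lemma~\ref{theorem:tridiag_pair} and Lemma~\ref{lemma:subdiag} to the tridiagonal pencil $(\widetilde T,\widetilde S)$ of the projection of $A^H$ onto $\Ls$ orthogonal to $\K$ --- whose subdiagonal ratios are the $\psi_i$ --- and then to transport the conclusion back to $(T,S)$ through $V^HA^HW=(W^HAV)^H$ and the non-uniqueness of the pencil representation; this trades the inv-Hessenberg bookkeeping for the (equally delicate) bookkeeping relating two non-unique tridiagonal pencils, but either way the identification of the superdiagonal quotient with $\bar\psi_{i-1}$ is where the real content sits.
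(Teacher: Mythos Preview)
Your primary route---tracking the pole information through the QR-factorizations, transfer-throughs, and turnovers of Theorem~\ref{theorem:hessenberg-like_pair}, and then through the inversion $(H_W,K_W)\mapsto(K_W^{-1},H_W^{-1})$---is left at exactly the point where the content lies. You assert that ``one checks'' the triangular diagonals cancel and what survives is $\psi_{i-1}$, with the shift by one arising from a ``boundary operation,'' but none of this is argued; in particular, the turnover lemma mixes the sine parameters of $Q_H$ and $Q_K$ nonlinearly, and it is far from obvious that the subdiagonal \emph{ratios} of the inverted pair emerge as the original pole ratios in reversed-shifted order. This is not a wrong approach, but as written it is a sketch of a program, not a proof.

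Your ``alternative'' is precisely the route the paper takes, and you sell it short by calling it ``equally delicate.'' The paper introduces the dual tridiagonal pencil $(\widetilde S,\widetilde T)$ for $V^HA^HW$, whose subdiagonal ratios are the $\psi_i$ by Lemma~\ref{lemma:subdiag}, and then observes that $TS^{-1}=\widetilde T^{-H}\widetilde S^{H}$ rearranges to the single identity
\[
\widetilde T^{H}T=\widetilde S^{H}S.
\]
Both sides are pentadiagonal; equating the entries on the second superdiagonal gives $\bar{\tilde t}_i\,\tau_{i+1}=\bar{\tilde s}_i\,\sigma_{i+1}$, hence $\tau_{i+1}/\sigma_{i+1}=\overline{\tilde s_i/\tilde t_i}=\bar\psi_i$. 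The index shift and the freedom in $T_{1,2},S_{1,2}$ fall out automatically, with no inv-Hessenberg bookkeeping and no appeal to non-uniqueness beyond the existence of $(\widetilde T,\widetilde S)$. That pentadiagonal identity is the idea you are missing.
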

			\begin{proof}
				Assume we have $W^H A V S = T$ as in Theorem \ref{theorem:struct_BiRat_pair}.
				Note that another tridiagonal pencil $(\widetilde{S},\widetilde{T})$ exists for which
				\begin{equation}\label{eq2}
				V^HA^HW\widetilde{T}=\widetilde{S}.
				\end{equation}
				Equation \eqref{eq:tridiag_pencil} represents projection onto $\K$ and orthogonal to $\Ls$ and Equation \eqref{eq2} represents projection onto $\Ls$ and orthogonal to $\K$. Hence, from Lemma \ref{lemma:subdiag} we know that the ratios of the subdiagonals of $(T,S)$ and $(\widetilde{S},\widetilde{T})$ reveal the poles of $\K$ and $\Ls$, respectively.\\
				Starting from Equation \eqref{eq:tridiag_pencil} and \eqref{eq2} we can relate the matrix pencils as follows
				\begin{equation*}
				\begin{cases}
				W^HAV = TS^{-1}\\
				V^HA^HW = \widetilde{S}\widetilde{T}^{-1}
				\end{cases}
				\Rightarrow
				\begin{cases}
				W^HAV = TS^{-1}\\
				W^HAV = \widetilde{T}^{-H}\widetilde{S}^H
				\end{cases}
				\end{equation*}
				concluding that $TS^{-1} = \widetilde{T}^{-H}\widetilde{S}^H$. Rewriting this equation as $\widetilde{T}^HT = \widetilde{S}^HS$	leads to two pentadiagonal matrices.
				Let us assign a variable to each off-diagonal element, diagonal elements are marked as an $\x$, because these are not relevant for the proof
				\begin{align*}
				\begin{bmatrix}
				\x&\tilde{\tau}_1\\
				\tilde{t}_1 & \x & \tilde{\tau}_2\\
				& \tilde{t}_2 &\x &\ddots \\
				& & \ddots &  \ddots& \tilde{\tau}_{n-1}\\
				& & & \tilde{t}_{n-1} & \x
				\end{bmatrix}^H
				\begin{bmatrix}
				\x&{\tau}_1\\
				{t}_1 & \x & {\tau}_2\\
				& {t}_2 &\x &\ddots \\
				& & \ddots &  \ddots& {\tau}_{n-1}\\
				& & & {t}_{n-1} & \x
				\end{bmatrix}
				= \\
				\begin{bmatrix}
				\x&\tilde{\sigma}_1\\
				\tilde{s}_1 & \x & \tilde{\sigma}_2\\
				& \tilde{s}_2 &\x &\ddots \\
				& & \ddots &  \ddots& \tilde{\sigma}_{n-1}\\
				& & & \tilde{s}_{n-1} & \x
				\end{bmatrix}^H
				\begin{bmatrix}
				\x&{\sigma}_1\\
				{s}_1 & \x & {\sigma}_2\\
				& {s}_2 &\x &\ddots \\
				& & \ddots &  \ddots& {\sigma}_{n-1}\\
				& & & {s}_{n-1} & \x
				\end{bmatrix}.
				\end{align*}
				Hence, by equating the second superdiagonals and second subdiagonals of both pentadiagonal matrices we get
				\begin{align*}
				&\begin{cases}
				t_i\tilde{\tau}_{i+1}^H = s_i \tilde{\sigma}_{i+1}^H\\
				\tau_{i+1}\tilde{t}_{i}^H = \sigma_{i+1} \tilde{s}_{i}^H
				\end{cases}, \qquad i=1,\dots,m-2\\
				\Rightarrow &\begin{cases}
				\xi_i = {t_i}/{s_i} = {\tilde{\sigma}_{i+1}^H}/{\tilde{\tau}_{i+1}^H}\\
				\psi_i = {\tilde{s}_i}/{\tilde{t}_i} = {{\tau}_{i+1}^H}/{{\sigma}_{i+1}^H}\\
				\end{cases}, \qquad i=1,\dots,m-2.
				\end{align*}
				where the last equality uses the result from Lemma \ref{lemma:subdiag}.
			\end{proof}
		
			Note that Lemma \ref{lemma:superdiag} allows for freedom in the choice of $T_{1,2}$ and $S_{1,2}$.
			The results from Theorem \ref{theorem:tridiag_pair}, Lemma \ref{lemma:subdiag} and Lemma \ref{lemma:superdiag} can be used to construct a six-term recurrence relation, which builds biorthogonal bases for rational Krylov subspaces. This is the Lanczos-type iteration, given in Algorithm \ref{alg:RatLan}. The derivation of the iteration is omitted since it is straightforward but lengthy.

\subsection{Numerical experiments}\label{sec:numerics}
	The validity of the Lanczos-type iteration is verified by applying it to solve an eigenvalue problem. 
	Three characteristics of the algorithm will be monitored:
	\begin{itemize}
		\item $\Vert W_n^HV_n - I\Vert_2$, a measure for the biorthogonality of bases $W$ and $V$,
		\item $\Vert W_{n+1}^H A V_{n+1} \underbar{S}_n -\underbar{T}_n \Vert_2$, a measure for the quality of the oblique projection and,
		\item a Ritz plot visualizing the quality of Ritz values as approximations to eigenvalues.
	\end{itemize}
		The projection measure uses the expansion matrices $(\underbar{T}_n,\underbar{S}_n)$ resulting from the Lanczos iteration, i.e., they are of dimension $(n+1)\times n$.
		We compare the Ritz values $\theta^{(n)}$ of $(T_n,S_n)$, last row of $(\underbar{T}_n,\underbar{S}_n)$ removed, with the eigenvalues $\lambda$ of $A$. Ritz plots visualize how close the $n$ Ritz values $\theta^{(n)}_i$, $1\leq i\leq n$, are to the closest eigenvalue $\lambda_i := \underset{\lambda}{\min} \vert\theta^{(n)}_i-\lambda\vert$ for increasing $n$. The colours show how accurate the approximation is:
	\begin{align*}
	\text{red: } &\Vert\theta^{(n)}_i-\lambda_i \Vert_2<10^{-8},\\
	\text{yellow: }&\Vert\theta^{(n)}_i-\lambda_i \Vert_2<10^{-5},\\
	\text{green: }&\Vert\theta^{(n)}_i-\lambda_i \Vert_2<10^{-2},\\
	\text{blue: } &\Vert\theta^{(n)}_i-\lambda_i\Vert_2\geq 10^{-2}.
	\end{align*}

	\begin{example}\label{example:num1}
		Consider a random 50 $\times$ 50 upper triangular matrix with eigenvalues $\lambda_i = i$, $i=1,2,\dots,50$.
		Krylov subspaces $\K(A,v;\Xi)$ and $\Ls(A^H,w;\varPhi)$ are build using $v=w$ and $\Xi = \varPhi = \{0,24.1,0,24.1,\dots \}$.
		Biorthogonal projection using these subspaces lead to Figure \ref{fig:num1_biorth} for the biorthogonality measure, Figure \ref{fig:num1_proj} for the measure quantifying the projection $(T,S)$ and Figure \ref{fig:num1_Ritz} showing the Ritz plot. The Ritz plot clearly shows that convergence is concentrated around the chosen poles $0$ and $24.1$. This is the expected behaviour, the convergence of rational Krylov methods can be focussed on certain parts of the spectrum \cite{Ru84}.

		\begin{figure}[!ht]
			\centering
			\begin{subfigure}[b]{0.49\textwidth}
				\setlength\figureheight{3cm}
				\setlength\figurewidth{5cm}
				{
%
%
\begin{tikzpicture}

\begin{axis}[%
width=0.951\figurewidth,
height=\figureheight,
at={(0\figurewidth,0\figureheight)},
scale only axis,
xmin=0,
xmax=25,
xtick={0,5,10,15,20,25},
ymode=log,
ymin=1e-16,
ymax=1e+02,
yminorticks=true,
axis background/.style={fill=white}
]
\addplot [color=red, draw=none, mark=o, mark options={solid, red}, forget plot]
  table[row sep=crcr]{%
1	0\\
2	3.1463e-16\\
3	3.2398e-16\\
4	3.7502e-15\\
5	1.1612e-13\\
6	1.5703e-13\\
7	2.2401e-13\\
8	6.8067e-13\\
9	1.0358e-12\\
10	1.6388e-12\\
11	7.3848e-11\\
12	1.3336e-10\\
13	2.2668e-09\\
14	2.2828e-09\\
15	5.5837e-08\\
16	5.624e-08\\
17	1.2976e-06\\
18	1.9516e-06\\
19	0.00042841\\
20	0.00098497\\
21	0.00098504\\
22	0.0094687\\
23	0.50598\\
24	0.74475\\
25	1.3053\\
};
\end{axis}
\end{tikzpicture}
				\caption{$\Vert W_n^HV_n - I\Vert_2$}
				\label{fig:num1_biorth}
			\end{subfigure}
			\begin{subfigure}[b]{0.49\textwidth}
				\setlength\figureheight{3cm}
				\setlength\figurewidth{5cm}
				{
%
%
\begin{tikzpicture}

\begin{axis}[%
width=0.951\figurewidth,
height=\figureheight,
at={(0\figurewidth,0\figureheight)},
scale only axis,
xmin=0,
xmax=25,
xtick={0,5,10,15,20,25},
ymode=log,
ymin=1e-16,
ymax=1e+04,
yminorticks=true,
axis background/.style={fill=white}
]
\addplot [color=red, draw=none, mark=o, mark options={solid, red}, forget plot]
  table[row sep=crcr]{%
1	5.5511e-17\\
2	7.6417e-15\\
3	8.015e-14\\
4	3.0651e-12\\
5	3.2659e-12\\
6	2.1328e-11\\
7	3.972e-11\\
8	1.8493e-10\\
9	2.013e-10\\
10	1.333e-08\\
11	1.6274e-08\\
12	8.4004e-07\\
13	8.4198e-07\\
14	2.2336e-05\\
15	2.2402e-05\\
16	0.00051915\\
17	0.00063919\\
18	0.1714\\
19	0.28438\\
20	0.28442\\
21	2.4261\\
22	202.42\\
23	246.01\\
24	494.43\\
25	494.57\\
};
\end{axis}
\end{tikzpicture}
				\caption{$\Vert W_{n+1}^H A V_{n+1} \underbar{S}_n -\underbar{T}_n \Vert_2$}
				\label{fig:num1_proj}
			\end{subfigure} 
			\caption{Measures for Example \ref{example:num1}, with $n$ the dimension of the rational Krylov subspaces.}
		\end{figure}
		
		\begin{figure}[!ht]
			\centering
			\includegraphics[clip, trim=6.8cm 20.41cm 6cm 4cm, width=0.5\textwidth]{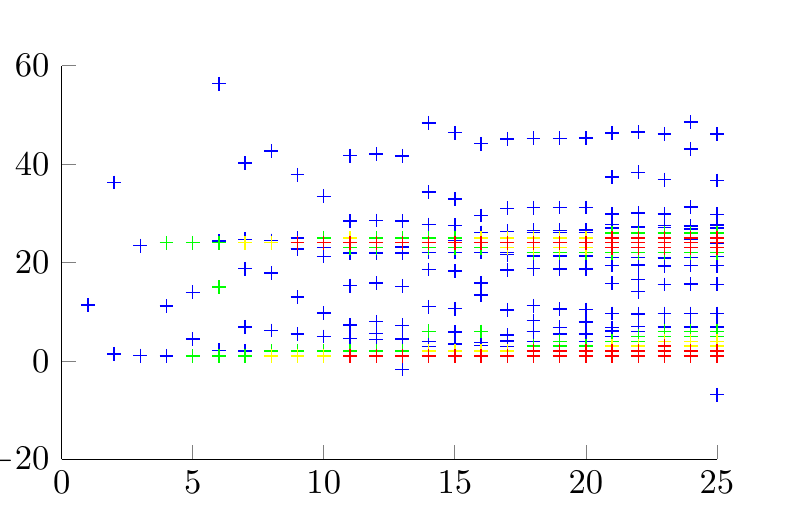}
			\caption{Ritz plot for Example \ref{example:num1}, with $n$ the dimension of the rational Krylov subspaces.}
			\label{fig:num1_Ritz}
		\end{figure}
	\end{example}
	
	\begin{example}\label{example:num2}
		To show the connection between convergence of eigenvalues and biorthogonality, we choose a pole closer to an eigenvalue. This leads to faster convergence to this eigenvalue and thus to faster loss of biorthogonality \cite{Pa71}. The poles chosen now are $\Xi = \varPhi = \{0,24+10^{-5},0,24+10^{-5},\dots \}$. Figure \ref{fig:num2_Ritz} shows that eigenvalue $\lambda=24$ is found in fewer iterations than in Example \ref{example:num1}. This leads to faster loss of biorthogonality of the bases, shown on Figure \ref{fig:num2_biorth}. Figure \ref{fig:num2_proj} shows that the quality of the projection is related to the biorthogonality of the bases.

		\begin{figure}[!ht]
			\centering
			\begin{subfigure}[b]{0.49\textwidth}
				\setlength\figureheight{3cm}
				\setlength\figurewidth{5cm}
				{
%
%
\begin{tikzpicture}

\begin{axis}[%
width=0.951\figurewidth,
height=\figureheight,
at={(0\figurewidth,0\figureheight)},
scale only axis,
xmin=0,
xmax=25,
xtick={0,5,10,15,20,25},
ymode=log,
ymin=1e-16,
ymax=1e+02,
yminorticks=true,
axis background/.style={fill=white}
]
\addplot [color=red, draw=none, mark=o, mark options={solid, red}, forget plot]
  table[row sep=crcr]{%
1	0\\
2	2.1566e-15\\
3	4.6487e-11\\
4	4.6488e-11\\
5	1.909e-06\\
6	0.0001\\
7	0.010024\\
8	0.011337\\
9	1.6418\\
10	1.6419\\
11	1.6419\\
12	1.6484\\
13	1.6579\\
14	4.1088\\
15	4.3799\\
16	5.7549\\
17	5.8309\\
18	5.8556\\
19	6.0092\\
20	6.1575\\
21	7.3856\\
22	7.4227\\
23	7.4625\\
24	7.4996\\
25	7.5663\\
};
\end{axis}
\end{tikzpicture}
				\caption{$\Vert W_n^HV_n - I\Vert_2$}
				\label{fig:num2_biorth}
			\end{subfigure}
			\begin{subfigure}[b]{0.49\textwidth}
				\setlength\figureheight{3cm}
				\setlength\figurewidth{5cm}
				{
%
%
\begin{tikzpicture}

\begin{axis}[%
width=0.951\figurewidth,
height=\figureheight,
at={(0\figurewidth,0\figureheight)},
scale only axis,
xmin=0,
xmax=25,
xtick={0,5,10,15,20,25},
ymode=log,
ymin=1e-16,
ymax=1e+05,
yminorticks=true,
axis background/.style={fill=white}
]
\addplot [color=red, draw=none, mark=o, mark options={solid, red}, forget plot]
  table[row sep=crcr]{%
1	1.7347e-17\\
2	4.7801e-10\\
3	8.8506e-10\\
4	4.582e-05\\
5	0.0063307\\
6	0.7239\\
7	0.97311\\
8	101.14\\
9	2765.6\\
10	5114.6\\
11	5114.6\\
12	5122.2\\
13	5132.4\\
14	5820.5\\
15	5820.6\\
16	5834.3\\
17	5834.3\\
18	6310.7\\
19	9263.4\\
20	13859\\
21	13859\\
22	14108\\
23	14247\\
24	14741\\
25	14741\\
};
\end{axis}
\end{tikzpicture}
				\caption{$\Vert W_{n+1}^H A V_{n+1} \underbar{S}_n -\underbar{T}_n \Vert_2$}
				\label{fig:num2_proj}
			\end{subfigure} 
			\caption{Measures for Example \ref{example:num2}, with $n$ the dimension of the rational Krylov subspaces.}
		\end{figure}
		%
		%
		%
		\begin{figure}[!ht]
			\centering
			\includegraphics[clip, trim=6.8cm 20.41cm 6cm 4cm, width=0.5\textwidth]{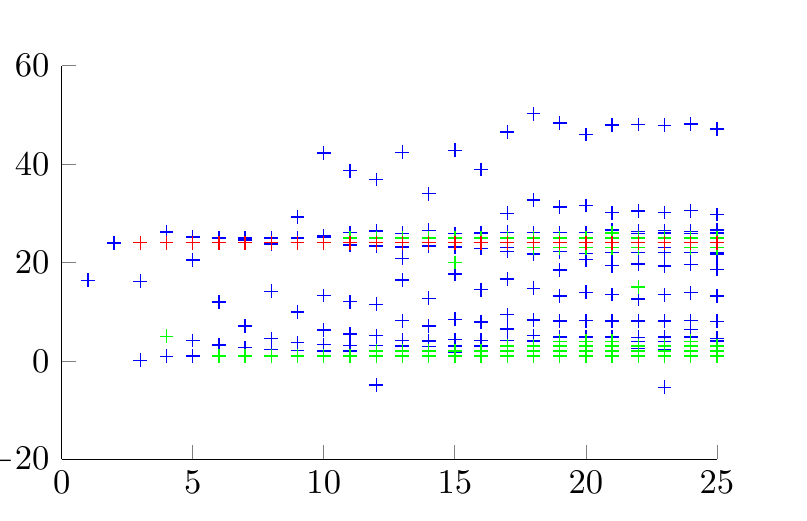}
			\caption{Ritz plot for Example \ref{example:num2}, with $n$ the dimension of the rational Krylov subspaces.}
			\label{fig:num2_Ritz}
		\end{figure}
		
	\end{example}

	Note that we did not use examples where $\Xi\neq\varPhi$, since the behaviour of such choices is not comparable with any existing Lanczos-type iterations and subject to future research.
	From Example \ref{example:num1} and Example \ref{example:num2} we conclude that the novel Lanczos-type iteration exhibits the expected behaviour, i.e., comparable to the behaviour of known iterations. Hence, the validity of the rational Lanczos iteration is substantiated.
	
	\section{Conclusion}
	A general framework to predict the various structures arising in the context of rational Krylov subspace methods is developed.
	From this framework a pair of short recurrence relations for biorthogonal bases of rational Krylov subspaces is deduced. Based on the short recurrence relation a Lanczos-type iteration is derived which constructs these bases together with a tridiagonal pencil representing the obliquely projected matrix.
	The framework generalizes many classical and more recent results, as does the novel rational Lanczos iteration.
	Numerical tests are performed as a proof of concept for the iteration.
	\section{Future Research}
	A proper analysis of the numerical behaviour of the Lanczos iteration could provide the means to make it more stable.\\
	Furthermore the relation between biorthogonal rational Krylov subspaces and biorthogonal functions will be looked into.

	\appendix
		\clearpage	
		\fakesection{}
	\thispagestyle{empty}
		\begin{algorithm}
			\begin{algorithmic}[1]
				\Procedure{RatLan}{$A,v,w,n,\{b_i\}_{i=2}^{n},\{l_i\}_{i=2}^{n},\{\lambda_i\}_{i=2}^{n},\{\beta_i\}_{i=2}^{n}$}
				\State $\lambda_1$ and $\beta_1$ can be chosen arbitrary, but not simultaneously 0
				\State $l_1$ and $b_1$ can be chosen arbitrary, but not simultaneously 0
				\State $\bv_1\gets (l_2 A-b_2 I)^{-1}v$
				\State $\bw_1\gets (\beta_2 A^H-\lambda_2 I)^{-1}w$
				\State $\temp_1\gets (\bv_1,w)/(A\bv_1,w)$
				\State $\temp_2\gets (\bw_1,v)/(\bw_1,Av)$
				\State $\hv_2 \gets -\temp_1 A \bv_1 + \bv_1$
				\State $\hw_2 \gets -\temp_2 A^H \bw_1 + \bw_1$
				
				\State $\bar{\sigma} \bar{\gamma_{1}} r d_1 = 1/(\hv_2,\hw_2)$ 
				\State $l_2 \gets l_2/r$
				\State $b_2 \gets b_2/r$
				\State $\beta_2 \gets \beta_2/\sigma$
				\State $\lambda_2 \gets \lambda_2/\sigma$
				\State $c_1\gets d_1\temp_1$
				\State $\delta_1\gets \gamma_1\temp_2$
				\State $v_2 \gets r d_1 \hv_2$
				\State $w_2 \gets \sigma \gamma_1 \hw_2$
				\For{$i=2:n-1$}
				\If {$\beta_{i-1}\neq 0$}
				\State $\tv_{i-1} = 1/{\bar{\beta}_{i-1}} (l_{i+1} A-b_{i+1}I)^{-1} (\bar{\beta}_{i-1} A -\bar{\lambda}_{i-1}I) v_{i-1} $
				\Else
				\State $\tv_{i-1} = - (l_{i+1} A-b_{i+1}I)^{-1} \lambda_{i-1} v_{i-1}$ 
				\EndIf
				
				\If {$l_{i-1}\neq 0$}
				\State $\tw_{i-1} = 1/{\bar{l}_{i-1}} (\beta_{i+1} A^H-\lambda_{i+1}I)^{-1} (\bar{l}_{i-1} A^H -\bar{b}_{i-1}I) w_{i-1} $ 
				\Else
				\State $\tw_{i-1} = -(\beta_{i+1} A^H-\lambda_{i+1}I)^{-1} \bar{b}_{i-1} w_{i-1} $ 
				\EndIf

				\State $\bv_i \gets (l_{i+1}A - b_{i+1} I)^{-1} v_i$
				\State $\bw_i \gets (\beta_{i+1}A^H - \lambda_{i+1} I)^{-1} w_i$
				
				\State $\tempv_1 \gets \frac{(\tv_{i-1},w_{i-1}) (A\bv_{i} ,w_i) - (\tv_{i-1},w_i)(A\bv_i,w_{i-1})}{(\bv_{i},w_{i-1}) (A\bv_{i} ,w_i) - (\bv_{i},w_i)(A\bv_i,w_{i-1})} $
				\State $\tempv_2 \gets \tempv_1 \frac{(\bv_i,w_i)}{(A\bv_i,w_i)} - \frac{(\tv_{i-1},w_i)}{(A\bv_i,w_i)}$
				\State $\tempw_1 \gets \frac{(\tw_{i-1},v_{i-1}) (\bw_{i} ,Av_i) - (\tw_{i-1},v_i)(\bw_i,Av_{i-1})}{(\bw_{i},v_{i-1}) (\bw_{i} ,Av_i) - (\bw_{i},v_i)(\bw_i,Av_{i-1})} $
				\State $\tempw_2 \gets \tempw_1 \frac{(\bw_i,v_i)}{(\bw_i,Av_i)} - \frac{(\tw_{i-1},v_i)}{(\bw_i,Av_i)}$
				\State $\hv_{i+1} \gets -\tempv_2 A \bar{v}_i + \tempv_1 \bar{v}_i - \tv_{i-1}$
				\State $\hw_{i+1} \gets -\tempw_2 A^H \bar{w}_i + \tempw_1 \bar{w}_i - \tw_{i-1}$
				\State $\bar{\alpha}_i u_i = 1/(\hv_{i+1},\hw_{i+1})$ 
				
				\State $v_{i+1}\gets u_i \hv_{i+1}$
				\State $w_{i+1}\gets \alpha_i \hw_{i+1}$
				\State $d_i \gets \tempv_1 u_i$
				\State $c_i \gets \tempv_2 u_i$
				\State $\gamma_i \gets \tempw_1 \alpha_i$
				\State $\delta_i \gets \tempw_2 \alpha_i$
				\EndFor
				\State $V_n \gets [v,v_2,\dots, v_n]$
				\State $W_n \gets [w,w_2,\dots, w_n]$  \Comment{Biorthogonal bases $W_n^HV_n=I$}
				\EndProcedure
			\end{algorithmic}
			\caption{Nonhermitian rational Lanczos iteration}
			\label{alg:RatLan}
	\end{algorithm}

	\clearpage
	\bibliographystyle{siam}
	\bibliography{references}
	
\end{document}